\theoremstyle{thmstyleone}%
\newtheorem{theorem}{Theorem}
\newtheorem{proposition}[theorem]{Proposition}%
\theoremstyle{thmstyletwo}%
\newtheorem{remark}{Remark}%
\theoremstyle{thmstylethree}%
\newcommand{\HH}{\ensuremath{{\mathcal H}}}
\newcommand{\M}{\ensuremath{{\mathcal M}}}
\newcommand{\Y}{\ensuremath{{\mathcal Y}}}
\newcommand{\TT}{\ensuremath{\mathbb{T}}}
\newcommand{\funp}{\ensuremath{\ell}}
\newcommand{\emp}{\ensuremath{{\varnothing}}}
\newcommand{\Id}{\ensuremath{\operatorname{Id}}}
\newcommand{\RR}{\ensuremath{\mathbb{R}}}
\newcommand{\RP}{\ensuremath{\left[0,+\infty\right[}}
\newcommand{\RM}{\ensuremath{\left]-\infty,0\right]}}
\newcommand{\RPP}{\ensuremath{\left]0,+\infty\right[}}
\newcommand{\RX}{\ensuremath{\left]-\infty,+\infty\right]}}
\newcommand{\NN}{\ensuremath{\mathbb N}}
\newcommand{\weakly}{\ensuremath{\:\rightharpoonup\:}}
\newcommand{\ran}{\ensuremath{\text{\rm ran}\,}}
\newcommand{\pinf}{\ensuremath{{+\infty}}}
\newcommand{\dom}{\ensuremath{\text{\rm dom}\,}}
\newcommand{\prox}{\ensuremath{\text{\rm prox}}}
\newtheorem{problem}[theorem]{Problem}
\newcommand{\scal}[2]{{\left\langle{{#1}\mid{#2}}\right\rangle}}
\newcommand{\minimize}[2]{\ensuremath{\underset{\substack{{#1}}}%
		{\text{\rm minimize}}\;\;#2}}
\newcommand{\menge}[2]{\big\{{#1}~\big |~{#2}\big\}} 
\newtheorem{algo}[theorem]{Algorithm}
\begin{document}

\title[Forward-backward algorithm for functions with locally 
Lipschitz gradient: applications to mean field games. ]{Forward-backward algorithm for functions with locally  Lipschitz gradient: applications to mean field games.}


\author[1]{\fnm{Luis} \sur{M. Brice\~{n}o-Arias}}\email{luis.briceno@usm.cl}
\equalcont{These authors contributed equally to this work.}

\author*[2]{\fnm{Francisco} \sur{J. Silva}}\email{francisco.silva@unilim.fr}

\author[3]{\fnm{Xianjin} \sur{Yang}}\email{yxjmath@caltech.edu}
\equalcont{These authors contributed equally to this work.}

\affil[1]{\orgdiv{Departamento de Matem\'atica}, \orgname{Universidad T\'ecnica Federico Santa Mar\'ia}, \orgaddress{\city{Santiago}, \country{Chile}}}

\affil*[2]{\orgdiv{Laboratoire XLIM}, \orgname{Universit\'e de Limoges}, \orgaddress{\city{Limoges}, \postcode{87060}, \country{France}}}

\affil[3]{\orgdiv{Computing and Mathematical Sciences}, \orgname{California Institute of Technology}, \orgaddress{\state{California}, \country{USA}}}


\abstract{In this paper, we provide a generalization of the forward-backward
	splitting algorithm for minimizing the sum of a proper convex lower semicontinuous function and a differentiable convex function whose gradient satisfies a locally Lipschitz-type 
	condition. We prove the convergence of our method and derive a linear convergence rate when the differentiable function is locally strongly convex. We recover classical results in the case when the gradient of the differentiable function is globally Lipschitz continuous and an already known linear convergence rate when the function is globally strongly convex. We apply the algorithm to approximate equilibria of variational mean field game systems with local couplings. Compared with some benchmark algorithms to solve these problems, our numerical tests show similar performances in terms of the number of iterations but an important gain in the required computational time.}

\keywords{Constrained convex optimization, Forward-Backward splitting, Locally Lipschitz gradient, Mean field games.}


\pacs[MSC Classification]{65K05, 90C25, 90C90, 91-08, 91A16, 49N80, 35Q89.}

\maketitle

\section{Introduction}

In this paper, we aim to solve the following problem. 
\vspace{0.2cm}
\begin{problem}
	\label{prob:main}
	Let $\HH$ be a real Hilbert space, let $\psi\colon\HH\to\RX$ be a proper lower semicontinuous convex function, and
	let $\varphi\colon\HH\to\RR$ be a convex G\^ateaux differentiable 
	function such that, for every 
	$x\in\HH$ and every $M\in\RPP$,
	there exist
	$\mu_{M}(x)\in\RP$ and 
	$L_{M}(x)\in 
	\left[\mu_{M}(x),\pinf\right[$ such that, for all $y, z\in B_M(x)$, 
	\begin{equation}
		\label{e:condgrad} 
		\mu_{M}(x) 
		\|y-z\|^2 \leq 
		\scal{y-z}{\nabla \varphi(y)- 
			\nabla \varphi(z)} \le  L_{M} 
		(x)
		\|y-z\|^2,
	\end{equation}
	where $B_M(x)$ is the
	open ball centered at $x$ with radius $M$.
	The problem is to
	\begin{equation}
		\label{e:mainprob}
		\minimize{x\in\HH}{\psi(x)+\varphi(x)},
	\end{equation}
	under the assumption that the set of 
	solutions to \eqref{e:mainprob}, denoted by $S\subset\HH$, is
	nonempty.
\end{problem}
\vspace{0.2cm}

This problem appears in several domains, including mean field games \cite{BAKS}, optimal transport problems \cite{benamoucarlier15,Papadakis_Peyre_Oudet_14}, image and signal processing \cite{Nelly,mms05}, control theory \cite{Bala63}, among others (see also \cite{mms05} and the references therein for more applications).
In the particular case when $\nabla \varphi$ is globally Lipschitz continuous, 
a standard algorithm for solving \eqref{e:mainprob} is the \textit{forward-backward splitting} (FBS), which finds its 
roots in the projected gradient method \cite{levitin1966constrained} (case 
$\psi=\iota_C$ for some nonempty closed convex set $C$). In the 
context of variational inequalities appearing in some PDEs, a 
generalization of the projected gradient method is proposed in 
\cite{BreSib, Mercier, Sibony}.
FBS combines a gradient step (forward) on $\varphi$ and a proximal (backward) step on $\psi$. More precisely, given $x_0\in\HH$, FBS iterates
\begin{equation}
	(\forall n\in\NN)\quad x_{n+1}=\prox_{\gamma\psi}\big(x_n-\gamma\nabla\varphi(x_n)\big),
\end{equation}
where $\gamma>0$ is known as the step-size and, for every $f\in\Gamma_0(\HH)$, $\prox_{f}\colon\HH\to\HH$ assigns to every $x\in\HH$ the unique solution to the lower semicontinuous strongly convex function $f+\|\cdot-x\|^2/2$. 
The weak convergence of the sequence 
generated by FBS to a solution to \eqref{e:mainprob} is guaranteed 
provided that $\gamma\in\left]0,2/L\right[$, where $L>0$ is the globally Lipschitz constant of $\nabla\varphi$ (see, e.g., \cite[Theorem~23.14]{19.Livre1}). One of the central arguments to prove the convergence is Baillon-Haddad's theorem \cite{MR500279}.
It asserts that globally Lipschitz continuous gradients of convex functions are cocoercive, from which it is proved that $\{x_n\}_{n\in\NN}$
is a F\'{e}jerian sequence, i.e., its distance to any solution is decreasing with $n\in\NN$.
If, in addition, $\varphi$ is strongly convex, FBS converges linearly to the unique solution to 
\eqref{e:mainprob} \cite{Nelly,Taylor}. However, the globally Lipschitz continuity on $\nabla\varphi$ is quite restrictive in applications. 

An approach to weaken the globally Lipschitz continuity of 
$\nabla\varphi$ is to use linesearch procedures to compute the step-size at each iteration of FBS (see, e.g., \cite{Yunier1,Yunier2,Salzo} and the references therein). In this setting, the convergence is then ensured under weaker conditions on $\nabla\varphi$ as, e.g., the uniform continuity on weakly compact sets \cite[Theorem~3.18]{Salzo}. However, each evaluation made in the linesearch procedure can be costly, e.g., when the proximity operator of $\psi$ is not simple to compute. Moreover, 
depending on the linesearch parameters, the resulting step-size can be very small, affecting the efficiency of the algorithm.

In this paper, we provide a new approach to guarantee the convergence of FBS when $\nabla\varphi$ satisfies \eqref{e:condgrad}. Our approach relies on a recent refinement of Baillon-Haddad's theorem for convex sets \cite{VPA},
which allows us to use the cocoercivity property of $\nabla\varphi$ in balls. This permits us to use similar arguments than in the globally Lipschitz continuous case to prove the convergence of FBS under the assumption \eqref{e:condgrad}. In addition, we can estimate an upper bound for the step-size and a linear convergence rate in the presence of strong convexity, as in the globally Lipschitz case. We also recover the classic convergence result for FBS when $\nabla\varphi$ is globally Lipschitz continuous, as well as the linear convergence rate in \cite{Nelly,Taylor} when, in addition, $\varphi$ is strongly convex.

Another contribution of this paper is the application of the proposed algorithm to approximate equilibria of  variational Mean Field Games (MFGs) with local couplings. The main purpose of MFGs theory, introduced independently by Lasry and Lions in~\cite{Lasry_Lions_2006i,MR2271747,Lasry_Lions_2007}, and by Caines, Huang, and Malham\'e in~\cite{Huang_et_all,HMC06}, is to describe the asymptotic behaviour of Nash equilibria of non-cooperative symmetric dynamic differential games with a large number of indistinguishable  players. We refer to~\cite{MR4214773,MR3752669,MR3753660,MR3559742,MR3195844}, and the references therein, for a general overview on MFGs theory including analytic and probabilistic aspects, as well as their numerical approximation and applications in crowd motion, economics, and finance. A particular class of MFGs, called {\it variational} MFGs, characterizes the aforementioned Nash equilibria in terms of the first order optimality condition of an associated optimization problem. This viewpoint opens the door to the application of variational techniques to establish the existence of MFG equilibria~\cite{MR3408214,cardaliaguet_meszaros_santambrogio_2018,MR3420414,meszaros_silva_2018} and to approximate them numerically by using convex optimization methods~\cite{MR2888257,Benamou_et_al_2017,MR4534446,MR3968547,BAKS,Lavigne_Pfeiffer_22,Nurbekyan_et_all_2020}.

In the framework of ergodic and variational MFGs with monotone local couplings, we consider the finite difference discretization introduced in~\cite{Gomesex}. 
Under assumptions ensuring the existence of a unique classical solution to the MFGs system, it is shown in \cite{Achdou_Camilli_Capuzzo_Dolcetta_13} that the solution to the finite difference scheme converges as the discretization step tends to zero. Let us also mention the contribution~\cite{Achdou_Porretta_16} dealing with the convergence of solutions to the scheme in the framework of weak solutions.  It turns out that this finite difference discretization preserves the variational structure, i.e. it corresponds to the optimality condition of a convex optimization problem. We apply the forward-backward algorithm to two dual formulations of this problem and we compare their performance in the case of a first order system involving a logarithmic coupling and which admits an explicit solution (see~\cite{Gomesex}). In the case of second order ergodic MFGs with power and logarithmic couplings, the forward-backward method is compared with state-of-the-art algorithms and we show, in this particular instance, a similar behavior in terms of the numbers of iterations but an important improvement in terms of computational time.

The remainder of the article is organized as follows. Section~\ref{sec:preliminaries} reviews the necessary notation and some background in convex analysis for later use. We present in Section~\ref{sec:general_case} our main result on the global convergence of the forward-backward algorithm. Section~\ref{sec:application_mfgs} recalls the  ergodic MFG system with local couplings, its finite difference approximation, and its variational formulation. We also compute two dual formulations for which the forward-backward algorithm will be applied in Section~\ref{sec:numerical_results}, devoted to numerical tests and comparisons with other benchmark algorithms in terms of number of iterations and computational time.

\section{Preliminaries}
\label{sec:preliminaries}
Throughout this paper, $\HH$ is a real Hilbert space endowed with the inner product $\scal{\cdot}{\cdot}$ and associated norm $\|\cdot\|$. The weak and the strong convergences are denoted by $\weakly$ and $\to$, respectively. Given $x\in\HH$ and $M>0$, the open and closed ball centered at $x$ with radius $M$ are denoted by $B_{M}(x)$ and $\overline{B}_{M}(x)$, respectively.
Let $f: \HH \to \RX$. The domain of $f$ is $\dom f = \menge{x \in \HH} {f(x) < +\infty}$ and $f$ is proper if $\dom f\neq \emp$. 
Denote by $\Gamma_0(\mathcal{H})$ the class of proper lower semicontinuous convex functions from 
$\HH$ to $]-\infty, +\infty]$. 
Suppose that $f \in \Gamma_0(\HH)$. 
The Fenchel conjugate of $f$ is 
\begin{align}
	\label{eq:conjugate}
	f^*: \HH \to \RX: u \mapsto \sup_{x \in \HH} \big(\scal{x}{u} - f(x)\big).
\end{align} 
We have $f^*\in\Gamma_0(\HH)$ and $f^{**} = f$.
The subdifferential of $f$ is the set-valued operator
\begin{align}
	\label{eq:subdifferential}
	\partial f: \HH \to 2^{\HH} : x \mapsto 
	\menge{u \in \HH}{(\forall y \in \HH) \quad \scal{y-x}{u} + f(x) \leq f(y)}
\end{align}  
and $\dom \partial f=\menge{x\in\HH}{\partial f(x)\neq\emp}$.
The \textit{proximity operator} of $f$ is
\begin{align}
	\label{eq:prox_def}
	\prox_f : \HH \to \HH : x \mapsto \arg\min_{y \in \HH} \left(f(y) + \frac{1}{2} \|x-y\|^2\right),
\end{align}
which, by the Fermat's rule~\cite[Theorem~16.3]{19.Livre1}, is characterized by
\begin{equation}
	\label{eq:prox_char}
	(\forall x \in \HH)(\forall p \in \HH) \quad p = \prox_f x \quad\Leftrightarrow\quad x-p \in \partial f(p).
\end{equation}
For every $\gamma \in\RPP$, ~\cite[Theorem~14.3{\rm(ii)}]{19.Livre1} implies that
\begin{align}
	\label{e:moreau_decomp}
	\prox_{\gamma f} =\Id - \gamma\,\prox_{f^*/\gamma}\circ (\Id/\gamma),
\end{align}
where $\Id\colon\HH\to\HH$ denotes the identity operator. Moreover, by~\cite[Proposition~12.28]{19.Livre1}, $\prox_{\gamma f}$ is firmly nonexpansive, i.e., for all $x,y\in\HH$, 
\begin{equation}
	\label{eq:prox_gamma_f_firm_nonexp}
 \|\prox_{\gamma f}x-\prox_{\gamma f}y\|^{2}+\|(\Id-\prox_{\gamma f})x-(\Id-\prox_{\gamma f})y\|^{2}\leq\|x-y\|^{2}.
\end{equation}
In addition, $f$ is supercoercive if
\begin{equation}
	\label{def:coercive}
	\lim_{|x|\to\pinf}f(x)/|x|=\pinf.
\end{equation}
It follows from~\cite[Proposition~14.15]{19.Livre1} and~\cite[Proposition~18.9]{19.Livre1} that if $f$ is supercoercive and strictly convex, then 
\begin{equation}
	\label{eq:consequence_supercoercive}
	\dom f^*=\HH\:\:\text{and $f^{*}$ is G\^ateaux differentiable on $\HH$.}
\end{equation}

Let $\mathcal{O}\subset\HH$ be an open convex set and let $\beta\in\RPP$. 
Then, $f$ is $\beta$-strongly convex on $\mathcal{O}$ if $f-\beta\|\cdot\|^2/2$ is convex on $\mathcal{O}$.
Now, suppose that $f$ is G\^ateaux differentiable on $\mathcal{O}$.
We say that $\nabla f$ is $\beta$-Lipschitz continuous on $\mathcal{O}$ if 
\begin{equation}
	\label{def:beta_lips_gradient}
	(\forall x\in \mathcal{O})(\forall y\in \mathcal{O})\quad\|\nabla f(x)-\nabla f(y)\|\leq\beta\|x-y\|
\end{equation}
and it is $1/\beta$-cocoercive on $\mathcal{O}$ if 
\begin{equation}
	\label{def:cocoercive_nabla_f}
	(\forall x\in \mathcal{O})(\forall y\in\mathcal{O})\quad\scal{\nabla f(x)-\nabla f(y)}{x-y}\geq \frac{1}{\beta}\|\nabla f(x)-\nabla f(y)\|^{2}.
\end{equation}

A crucial result in our theorem is the following enhanced version of the
Baillon-Haddad theorem~\cite{MR500279}. 
\vspace{0.2cm}
\begin{theorem}{{\rm \cite[Theorem~3.1]{VPA}}}
	\label{t:VPA}
	Let $\mathcal{O}\subset\HH$ be a nonempty convex open set, let $f\colon\mathcal{O}\to\RR$ be a lower semicontinuous convex function, and let $\beta\in\RPP$. Then the following are equivalent:
	\begin{enumerate}[{\rm(i)}]
		\item $f$ is G\^ateaux differentiable in $\mathcal{O}$ and 
		$\nabla f$ is $\beta$-Lipschitz continuous on $\mathcal{O}$.
		\item $\dfrac{\beta}{2}\|\cdot\|^{2}-f$ is convex on $\mathcal{O}$.
		\item $f$ is G\^ateaux differentiable in $\mathcal{O}$ and 
		$\nabla f$ is $1/\beta$-cocoercive on $\mathcal{O}$.
	\end{enumerate}
\end{theorem}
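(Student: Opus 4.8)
The plan is to establish the cyclic chain (i)$\Rightarrow$(ii)$\Rightarrow$(iii)$\Rightarrow$(i), so that the two implications involving Lipschitz continuity and cocoercivity bracket the analytic core, which sits in (ii)$\Rightarrow$(iii). I would dispatch the two short implications first. For (i)$\Rightarrow$(ii), Cauchy--Schwarz applied to \eqref{def:beta_lips_gradient} gives $\scal{\nabla f(x)-\nabla f(y)}{x-y}\le\beta\|x-y\|^2$ for $x,y\in\mathcal O$, so the G\^ateaux differentiable function $g:=(\beta/2)\|\cdot\|^2-f$ has a monotone gradient on $\mathcal O$ and is therefore convex there. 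For (iii)$\Rightarrow$(i), one further use of Cauchy--Schwarz in \eqref{def:cocoercive_nabla_f} yields $\|\nabla f(x)-\nabla f(y)\|^2\le\beta\scal{\nabla f(x)-\nabla f(y)}{x-y}\le\beta\|\nabla f(x)-\nabla f(y)\|\,\|x-y\|$, hence \eqref{def:beta_lips_gradient}.

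The substance is (ii)$\Rightarrow$(iii), and I would split it into differentiability and then cocoercivity. Set $h:=(\beta/2)\|\cdot\|^2-f$, which is convex on $\mathcal O$ by hypothesis, while $f$ is convex and, being finite and lower semicontinuous on the open set $\mathcal O$, continuous there; hence both $\partial f(x)$ and $\partial h(x)$ are nonempty at every $x\in\mathcal O$. Since $f+h=(\beta/2)\|\cdot\|^2$ is differentiable, the inclusion $\partial f(x)+\partial h(x)\subseteq\partial(f+h)(x)=\{\beta x\}$ forces $\partial f(x)$ to be a singleton: if $u_1,u_2\in\partial f(x)$ and $v\in\partial h(x)$, then $u_1+v=\beta x=u_2+v$, so $u_1=u_2$. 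Thus $f$ is G\^ateaux differentiable on $\mathcal O$, $\nabla h=\beta\,\Id-\nabla f$, and the subgradient inequality for $h$ rearranges into the quadratic upper bound
\[
0\le f(y)-f(x)-\scal{\nabla f(x)}{y-x}\le\frac{\beta}{2}\|y-x\|^2
\]
for all $x,y\in\mathcal O$, the left-hand inequality being convexity of $f$.

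It then remains to pass from this descent bound to cocoercivity. Writing $D(x,y):=f(y)-f(x)-\scal{\nabla f(x)}{y-x}$, a direct expansion gives the identity $D(x,y)+D(y,x)=\scal{\nabla f(x)-\nabla f(y)}{x-y}$, so \eqref{def:cocoercive_nabla_f} would follow from the interpolation inequality $D(x,y)\ge(1/(2\beta))\|\nabla f(x)-\nabla f(y)\|^2$. The natural route to the latter is the classical shifted-function argument: the convex function $\phi_x:=f-\scal{\nabla f(x)}{\cdot}$ satisfies $\nabla\phi_x=\nabla f-\nabla f(x)$, is minimized at $x$, and inherits the same quadratic upper bound; applying that bound at the gradient step $y-(1/\beta)\nabla\phi_x(y)$ together with minimality $\phi_x(x)\le\phi_x(y-(1/\beta)\nabla\phi_x(y))$ would give $D(x,y)=\phi_x(y)-\phi_x(x)\ge(1/(2\beta))\|\nabla f(y)-\nabla f(x)\|^2$.

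The hard part, and the only place where the local setting genuinely differs from the classical Baillon--Haddad theorem, is that the point $y-(1/\beta)\bigl(\nabla f(y)-\nabla f(x)\bigr)$ need not belong to $\mathcal O$, so the quadratic upper bound cannot be invoked there verbatim; when $\mathcal O=\HH$ this difficulty evaporates and the argument above is complete. To treat a proper open convex $\mathcal O$ I would localize, restricting the descent step to the largest factor that keeps it inside $\mathcal O$ and recovering the full constant by a limiting or chaining argument, or, alternatively, reduce to the global theorem by extending $f$ to a convex function on $\HH$ with globally $\beta$-Lipschitz gradient that agrees with $f$ on $\mathcal O$ and then applying classical Baillon--Haddad on $\HH$. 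I expect this containment issue to be the main obstacle; everything else is the standard convex-analytic machinery already recalled in Section~\ref{sec:preliminaries}.
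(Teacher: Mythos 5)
First, a point of comparison: the paper contains no proof of Theorem~\ref{t:VPA} to match your attempt against --- it imports the statement verbatim from \cite[Theorem~3.1]{VPA} --- so your reconstruction must stand on its own. Much of it does. The cycle (i)$\Rightarrow$(ii)$\Rightarrow$(iii)$\Rightarrow$(i) is well chosen, the two Cauchy--Schwarz implications are correct, and the differentiability half of (ii)$\Rightarrow$(iii) is sound: $f$ is continuous on $\mathcal{O}$ (finite, lower semicontinuous, convex on an open subset of a complete space), so $\partial f(x)$ and $\partial h(x)$ are nonempty, and $\partial f(x)+\partial h(x)\subseteq\partial(f+h)(x)=\{\beta x\}$ forces $\partial f(x)$ to be a singleton, whence G\^ateaux differentiability; the local descent bound $0\le D(x,y)\le(\beta/2)\|y-x\|^2$ for $x,y\in\mathcal{O}$ follows as you say.

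However, the proof is genuinely incomplete, and at exactly the point you flag: the passage from the descent bound to $D(x,y)\ge(1/(2\beta))\|\nabla f(y)-\nabla f(x)\|^2$ \emph{is} the content of the theorem in the local setting, and neither of your proposed repairs closes it. For the localization idea: replacing the full gradient step by $y-t\nabla\phi_x(y)$ with $t$ small enough to stay in $\mathcal{O}$ yields only $D(x,y)\ge t(1-\beta t/2)\|\nabla\phi_x(y)\|^2$, and the admissible $t$ tends to $0$ as $y$ approaches the boundary of $\mathcal{O}$ with $\nabla\phi_x(y)$ pointing outward, so the constant degenerates precisely where the claim has content; a chaining argument would need the descent iterates of $\phi_x$ to remain in $\mathcal{O}$ and the gradient norms along the chain to stay comparable to $\|\nabla\phi_x(y)\|$, and neither is available --- $\phi_x$ is undefined outside $\mathcal{O}$ and its gradient can shrink along the chain, so the telescoped sum does not reproduce the full constant $1/(2\beta)$. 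For the extension idea: it is circular. By the known characterization of $C^{1,1}$ convex extension (Wells; Azagra--Mudarra), a pair $(f,\nabla f)$ defined on a subset of $\HH$ extends to a convex function on all of $\HH$ with $\beta$-Lipschitz gradient \emph{if and only if} $f(y)\ge f(x)+\scal{\nabla f(x)}{y-x}+(1/(2\beta))\|\nabla f(y)-\nabla f(x)\|^2$ holds on that subset --- which is the very interpolation inequality you are trying to prove; convexity together with a $\beta$-Lipschitz gradient on $\mathcal{O}$ is not known to grant such an extension by any softer device. So the heart of (ii)$\Rightarrow$(iii) --- the one step that distinguishes \cite[Theorem~3.1]{VPA} from the classical Baillon--Haddad theorem, whose global proof you correctly reproduce --- remains open in your write-up.
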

\vspace{0.2cm}

Let $C\subset\HH$ be a nonempty closed convex set. Then 
\begin{equation}
\label{def:c_polar}
C^-=\menge{u\in\HH}{(\forall x\in C)\:\:\scal{x}{u}\le 0}
\end{equation}
is the polar cone to $C$, 
\begin{equation}
	\label{eq:indicator}
	\iota_C: \HH \to \RX: x \mapsto 
	\begin{cases}
		0, &  \text{if }x \in C;\\
		+\infty, &\text{if }x \not\in C   
	\end{cases}   
\end{equation}
is the indicator function of $C$, and
\begin{align}
	\sigma_C : \HH \to\RX: u \mapsto\sup_{x \in C} \scal{x}{u} \label{eq:support}
\end{align}
is the support function of $C$, which satisfies $\sigma_C= (\iota_C)^*$. We also denote by $P_C=\prox_{\iota_C}$ the projection operator onto $C$ and by $d_{C}(x)=\|x-P_Cx\|$ the distance of $x\in\HH$ to $C$.

For further background on convex analysis in Hilbert spaces, the reader is referred to \cite{19.Livre1}. 

\section{Forward-backward algorithm for locally Lipschitz functions}
\label{sec:general_case}
We start with some remarks concerning Problem~\ref{prob:main}.
\vspace{0.2cm}

\begin{remark}
	Consider the setting of Problem~\ref{prob:main}.
	\begin{enumerate}[{\rm(i)}]
		\item Without loss of generality, we 
		assume that, for every $x\in\HH$, 
		\begin{equation}
			0<M'\le M\quad\Rightarrow\quad 
			[L_{M'}(x)\le 
			L_{M}(x)\:\:\text{and}\:\:\mu_{M'}(x)\ge
			\mu_{M}(x)].
		\end{equation}
		
		\item Note that \eqref{e:condgrad} is equivalent
		to the $\mu_M(x)$-strong convexity of $\varphi$ on 
		$B_M(x)$ and the Lipschitz continuity of $\nabla \varphi$ on $B_M(x)$.
		Indeed, observe that {\rm\cite[Proposition~17.7(iii)]{19.Livre1}} implies 
		that the first inequality in \eqref{e:condgrad} is equivalent to 
		the  convexity of 
		$\varphi-\mu_M(x)\|\cdot\|^2/2$ on $B_M(x)$
		and, thus,
		the $\mu_M(x)$-strong convexity of $\varphi$ on 
		$B_M(x)$ {\rm(}in the case $\mu_M(x)=0$ it 
		reduces to convexity{\rm)}. Similarly, the 
		second inequality in \eqref{e:condgrad} is equivalent to the convexity of 
		$L_M(x)\|\cdot\|^2/2-\varphi$ on $B_M(x)$. Hence, 
		it follows from Theorem~\ref{t:VPA} that this is equivalent to the $L_M^{-1}(x)$-cocoercivity of $\nabla \varphi$ on $B_M(x)$ or the Lipschitz continuity of $\nabla \varphi$ on $B_M(x)$.
	\end{enumerate}
\end{remark}
\vspace{0.2cm}

Now we characterize the solution set as fixed points of a suitable operator.
\vspace{0.2cm}
\begin{proposition}
	\label{p:uniq}
	In the context of Problem~\ref{prob:main}, 
	\begin{equation}
		(\forall x^*\in\HH)\quad x^*\in S\:\:
		\Leftrightarrow 
		\:\:(\forall \gamma>0)\quad 
		x^*=\prox_{\gamma\psi}(x^*-\gamma
		\nabla\varphi(x^*)).
	\end{equation}
	Moreover, if
	we have $\mu_M(x^*)>0$ 
	for some $x^*\in S$ and $M>0$,
	then the solution is unique.
\end{proposition}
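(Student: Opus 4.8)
The plan is to split the statement into the fixed-point characterization and the uniqueness assertion, treating them separately.

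For the characterization, I would start from Fermat's rule \cite[Theorem~16.3]{19.Livre1}, which gives $x^*\in S\Leftrightarrow 0\in\partial(\psi+\varphi)(x^*)$. Since \eqref{e:condgrad} forces $\nabla\varphi$ to be Lipschitz on every ball, $\varphi$ is locally Lipschitz and hence continuous with $\dom\varphi=\HH$; this lets me invoke the subdifferential sum rule to write $\partial(\psi+\varphi)(x^*)=\partial\psi(x^*)+\nabla\varphi(x^*)$, so that optimality is equivalent to $-\nabla\varphi(x^*)\in\partial\psi(x^*)$. Fixing any $\gamma>0$ and multiplying by $\gamma$ keeps this equivalent to $-\gamma\nabla\varphi(x^*)\in\partial(\gamma\psi)(x^*)$, i.e. $(x^*-\gamma\nabla\varphi(x^*))-x^*\in\partial(\gamma\psi)(x^*)$. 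The proximity operator characterization \eqref{eq:prox_char}, applied with the point $x^*-\gamma\nabla\varphi(x^*)$ and candidate proximal point $x^*$, translates this exactly into $x^*=\prox_{\gamma\psi}(x^*-\gamma\nabla\varphi(x^*))$. Since every step is an equivalence that does not depend on the choice of $\gamma$, the resulting condition holds for one $\gamma>0$ if and only if it holds for all of them, which yields the stated biconditional.

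For uniqueness, assume $\mu_M(x^*)>0$ for some $x^*\in S$ and $M>0$. Since $\psi+\varphi$ is convex, $S$ is convex. Suppose, for contradiction, there is another solution $\bar x\in S$ with $\bar x\neq x^*$. I would choose $t\in\left]0,1\right]$ small enough that $x_t:=x^*+t(\bar x-x^*)$ lies in the open ball $B_M(x^*)$ (take $t<M/\|\bar x-x^*\|$); convexity of $S$ gives $x_t\in S$, while $x_t\neq x^*$ and $x^*\in B_M(x^*)$. Writing the optimality conditions at both points as $-\nabla\varphi(x^*)\in\partial\psi(x^*)$ and $-\nabla\varphi(x_t)\in\partial\psi(x_t)$, the monotonicity of $\partial\psi$ yields $\scal{x_t-x^*}{\nabla\varphi(x_t)-\nabla\varphi(x^*)}\le 0$. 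On the other hand, since $x^*,x_t\in B_M(x^*)$, the left inequality in \eqref{e:condgrad} gives $\scal{x_t-x^*}{\nabla\varphi(x_t)-\nabla\varphi(x^*)}\ge\mu_M(x^*)\|x_t-x^*\|^2>0$, a contradiction. Hence $\bar x=x^*$ and $S=\{x^*\}$.

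The routine parts are Fermat's rule and the algebra with \eqref{eq:prox_char}. The point requiring care is the justification of the sum rule for $\partial(\psi+\varphi)$, which relies on $\varphi$ being real-valued and continuous, guaranteed here by the local Lipschitz continuity of $\nabla\varphi$ implicit in \eqref{e:condgrad}. For uniqueness, the only genuinely nontrivial idea is to combine the monotonicity of $\partial\psi$ with the local strong convexity of $\varphi$; the use of convexity of $S$ to pull a second solution into $B_M(x^*)$ is what makes the purely local hypothesis $\mu_M(x^*)>0$ suffice for global uniqueness.
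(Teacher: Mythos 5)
Your proof is correct. The fixed-point characterization follows the same path as the paper's one-line argument (which cites the convexity of $\psi+\varphi$, Fermat's rule, $\dom\varphi=\HH$, and \eqref{eq:prox_char}); you merely make explicit the sum rule $\partial(\psi+\varphi)=\partial\psi+\nabla\varphi$, and your justification of it---$\varphi$ is real-valued and locally Lipschitz, hence continuous, because \eqref{e:condgrad} makes $\nabla\varphi$ Lipschitz on balls---is exactly the reading the paper itself records in Remark~1(ii). For uniqueness, however, you take a genuinely different route. The paper pulls a second solution $y^*$ into the ball via $z^*=x^*+\frac{M(y^*-x^*)}{2\|y^*-x^*\|}$ and then invokes \cite[Theorem~17.10(iii)]{19.Livre1} to conclude from the first inequality in \eqref{e:condgrad} that $\varphi|_{B_M(x^*)}$, hence $(\psi+\varphi)|_{B_M(x^*)}$, is strictly convex, contradicting the presence of two minimizers in $B_M(x^*)\cap S$. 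You instead write the optimality conditions $-\nabla\varphi(x^*)\in\partial\psi(x^*)$ and $-\nabla\varphi(x_t)\in\partial\psi(x_t)$ at the two solutions and play the monotonicity of $\partial\psi$, which gives $\scal{x_t-x^*}{\nabla\varphi(x_t)-\nabla\varphi(x^*)}\le 0$, against the strong monotonicity $\scal{x_t-x^*}{\nabla\varphi(x_t)-\nabla\varphi(x^*)}\ge\mu_M(x^*)\|x_t-x^*\|^2>0$ from \eqref{e:condgrad}. Both arguments are sound; yours is more elementary, working directly at the level of the variational inequality without appealing to the strict-convexity theorem, while the paper's is shorter on the page by citing a known result. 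Incidentally, your choice $t<\min\{1,M/\|\bar x-x^*\|\}$ is slightly more careful than the paper's: the point $z^*$ lies on the segment $[x^*,y^*]$ (so that convexity of $S$ applies) only when $\|y^*-x^*\|\ge M/2$, the remaining case requiring $z^*$ to be replaced by $y^*$ itself---a trivially fixable imprecision that your parametrization avoids.
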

\begin{proof}
	The first assertion follows from the convexity of $\psi+\varphi$, Fermat's 
	rule, $\dom\varphi=\HH$, and \eqref{eq:prox_char}.
	Now suppose that there exist $x^*\in S$ and $M>0$ such that $\mu_M(x^*)>0$ and that there exists $y^*\in S$ such that $y^*\neq x^*$.
	Then, it follows from the convexity of $\psi+\varphi$ that $z^*=x^*+\frac{M(y^*-x^*)}{2\|y^*-x^*\|}\in B_M(x^*)\cap S$. 
	Moreover, it 
	follows 
	from \cite[Theorem~17.10(iii)]{19.Livre1} that first inequality in 
	\eqref{e:condgrad} implies that $\varphi|_{B_M(x^*)}$ is 
	strictly convex 
	and, thus, $(\psi+\varphi)|_{B_M(x^*)}$ is strictly convex.
	Hence, since $x^*\neq z^*$ are in $S\cap B_M(x^*)$, we obtain a contradiction and the uniqueness follows.
\end{proof}

\begin{proposition}
	\label{p:contractive}
	In the context of Problem~\ref{prob:main}, set
	\begin{equation}
		\label{e:defTgamma}
		(\forall \gamma>0)\quad 
		T_{\gamma}\colon x\mapsto 
		\prox_{\gamma\psi}(x-
		\gamma\nabla \varphi(x)),
	\end{equation}
	let $x\in\HH$, let $M>0$, and let 
	$\gamma\in\left]0,2/L_M(x)\right[$.
	Then, the following hold:
	\begin{enumerate}[{\rm(i)}]
		\item 
		\label{p:contractiveii}
		For all $y,z\in B_M(x)$ we have 
		\begin{align}
			\hspace{-1.8cm}
			\|T_{\gamma}y-T_{\gamma}z\|^2
			&\le \|y-z\|^2-
			\gamma\left(\frac{2}{L_M(x)}-\gamma\right)
			\|\nabla\varphi(y)-\nabla\varphi(z)\|^2\nonumber\\
			&\hspace{1cm}-\|y-T_{\gamma}y
			-(z-T_{\gamma}z)-
			\gamma(\nabla\varphi(y)-\nabla\varphi(z))\|^2.
		\end{align} 
		
		\item 
		\label{p:contractivei}
		Suppose that $\mu_M(x)>0$. 
		Then, for all $y,z\in B_M(x)$, we 
		have
		\begin{equation}
			\|T_{\gamma}y-T_{\gamma}z\|\le 
			\rho_M(x,\gamma)\|y-z\|,
		\end{equation}
		where 
		\begin{equation}
			\label{e:defrho}
			\rho_M\colon(x,\gamma)\mapsto 
			\max\big\{|1-\gamma\mu_M(x)|,|1-\gamma 
			L_M(x)|\big\}\in\left]0,1\right[.
		\end{equation}
	\end{enumerate}
\end{proposition}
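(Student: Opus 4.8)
The plan is to handle the two assertions separately, both resting on the (firm) nonexpansiveness of $\prox_{\gamma\psi}$ recorded in \eqref{eq:prox_gamma_f_firm_nonexp} together with the cocoercivity consequences of \eqref{e:condgrad} noted in the preceding Remark via Theorem~\ref{t:VPA}. Throughout I abbreviate $\mu=\mu_M(x)$ and $L=L_M(x)$, and for $y,z\in B_M(x)$ I write $w=y-z$ and $u=\nabla\varphi(y)-\nabla\varphi(z)$. Since $\varphi$ is differentiable on all of $\HH$, the operator $T_\gamma$ in \eqref{e:defTgamma} factors as $T_\gamma=\prox_{\gamma\psi}\circ(\Id-\gamma\nabla\varphi)$.

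For \eqref{p:contractiveii}, I would set $a=y-\gamma\nabla\varphi(y)$ and $b=z-\gamma\nabla\varphi(z)$, so that $T_\gamma y=\prox_{\gamma\psi}a$ and $T_\gamma z=\prox_{\gamma\psi}b$, and apply \eqref{eq:prox_gamma_f_firm_nonexp} to $a$ and $b$. This yields
\[
\|T_\gamma y-T_\gamma z\|^2\le\|a-b\|^2-\|(a-T_\gamma y)-(b-T_\gamma z)\|^2,
\]
where, since $a-T_\gamma y=(y-T_\gamma y)-\gamma\nabla\varphi(y)$, the subtracted term is precisely $\|(y-T_\gamma y)-(z-T_\gamma z)-\gamma u\|^2$, i.e. the last term of the claimed inequality. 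It then remains to expand $\|a-b\|^2=\|w\|^2-2\gamma\scal{w}{u}+\gamma^2\|u\|^2$ (using $a-b=w-\gamma u$) and to bound the cross term through the $L^{-1}$-cocoercivity of $\nabla\varphi$ on $B_M(x)$, namely $\scal{w}{u}\ge L^{-1}\|u\|^2$; collecting terms gives exactly the stated estimate. This part is a direct computation and I anticipate no difficulty.

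For \eqref{p:contractivei}, since $\prox_{\gamma\psi}$ is nonexpansive it suffices to prove that the forward step is a contraction, i.e. $\|w-\gamma u\|\le\rho_M(x,\gamma)\|w\|$ on $B_M(x)$. The lemma I would establish first is the interpolation inequality
\[
\scal{w}{u}\ge\tfrac{1}{L+\mu}\|u\|^2+\tfrac{\mu L}{L+\mu}\|w\|^2.
\]
To obtain it, note that the first inequality in \eqref{e:condgrad} makes $\varphi-\tfrac{\mu}{2}\|\cdot\|^2$ convex on $B_M(x)$, while both inequalities in \eqref{e:condgrad} force its gradient $\nabla\varphi-\mu\,\Id$ to be $(L-\mu)$-Lipschitz there. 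Theorem~\ref{t:VPA} then gives the $(L-\mu)^{-1}$-cocoercivity of $\nabla\varphi-\mu\,\Id$, and expanding $\scal{u-\mu w}{w}\ge(L-\mu)^{-1}\|u-\mu w\|^2$ and rearranging produces the displayed bound.

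Finally I would expand $\|w-\gamma u\|^2=\|w\|^2-2\gamma\scal{w}{u}+\gamma^2\|u\|^2$ and substitute the interpolation inequality in the form $\|u\|^2\le(L+\mu)\scal{w}{u}-\mu L\|w\|^2$ (legitimate because $\gamma^2>0$). The resulting upper bound, regarded as a function of $t=\scal{w}{u}/\|w\|^2\in[\mu,L]$, is \emph{affine} in $t$, hence attains its maximum over $[\mu,L]$ at an endpoint, and a short computation shows the two endpoint values equal $(1-\gamma\mu)^2\|w\|^2$ and $(1-\gamma L)^2\|w\|^2$; thus $\|w-\gamma u\|^2\le\rho_M(x,\gamma)^2\|w\|^2$ and the contraction follows after composing with $\prox_{\gamma\psi}$. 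That $\rho_M(x,\gamma)<1$ follows from $\gamma\in\left]0,2/L\right[$ and $\mu\le L$, which give $|1-\gamma L|<1$ and $|1-\gamma\mu|<1$. The main obstacle is the interpolation inequality together with the endpoint/affinity argument; once these are in place, \eqref{p:contractivei} is immediate.
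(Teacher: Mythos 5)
Your proposal is correct. Part \eqref{p:contractiveii} follows the same route as the paper: firm nonexpansiveness \eqref{eq:prox_gamma_f_firm_nonexp} applied to the forward points, plus the $1/L_M(x)$-cocoercivity of $\nabla\varphi$ on $B_M(x)$ obtained from \eqref{e:condgrad} via Theorem~\ref{t:VPA}; this is exactly the paper's computation \eqref{e:auxfin} specialized to $\mu=0$. For part \eqref{p:contractivei}, however, you take a genuinely different route. The paper works with $\phi_{\mu}=\varphi-\mu\|\cdot\|^{2}/2$ for a general $\mu\in\left[0,\mu_M(x)\right]$, writes $\Id-\gamma\nabla\varphi=(1-\gamma\mu)\Id-\gamma\nabla\phi_{\mu}$, and in \eqref{e:auxfin} isolates the term $\gamma\big(2-\gamma(L_M(x)+\mu)\big)\scal{y-z}{\nabla\phi_{\mu}(y)-\nabla\phi_{\mu}(z)}$, whose coefficient's sign dictates a case split: for $\gamma\le 2/(L_M(x)+\mu)$ monotonicity of $\nabla\phi_{\mu}$ yields the factor $|1-\gamma\mu|$, and for larger $\gamma$ the upper bound in \eqref{e:condgradphi} yields $|1-\gamma L_M(x)|$. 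You instead first prove the interpolation inequality
\begin{equation*}
\scal{w}{u}\ge\frac{1}{L+\mu}\|u\|^{2}+\frac{\mu L}{L+\mu}\|w\|^{2},
\end{equation*}
derived correctly by expanding the $(L-\mu)^{-1}$-cocoercivity of $\nabla\varphi-\mu\Id$ on the ball (again Theorem~\ref{t:VPA}), then eliminate $\|u\|^{2}$ from $\|w-\gamma u\|^{2}$ and maximize the resulting expression, affine in $t=\scal{w}{u}/\|w\|^{2}\in[\mu,L]$, at the endpoints; I checked that the endpoint values are indeed $(1-\gamma\mu)^{2}\|w\|^{2}$ and $(1-\gamma L)^{2}\|w\|^{2}$, after which composing with the (plain) nonexpansiveness of $\prox_{\gamma\psi}$ finishes the argument. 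The two routes are equivalent in outcome; yours makes the contraction factor's extremal structure transparent and is the standard ``interpolation'' argument for strongly convex gradient steps, while the paper's single parametrized expansion handles \eqref{p:contractiveii} and \eqref{p:contractivei} simultaneously and never needs the interpolation lemma. One shared caveat: when $L_M(x)=\mu_M(x)$, which Problem~\ref{prob:main} permits, the constant $(L-\mu)^{-1}$ is undefined (Theorem~\ref{t:VPA} requires $\beta\in\RPP$); in that degenerate case \eqref{e:condgrad} forces $u=\mu w$ on the ball, so your interpolation inequality holds with equality and the conclusion survives --- the paper's proof glosses over the same corner, so this is not a gap specific to your argument. (Also note the trivial case $w=0$, where $t$ is undefined but the bound is immediate.)
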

\begin{proof}
	Define, for every $\mu\in\left[0,\mu_M(x)\right]$, 
	\begin{equation}
		\phi_{\mu}\colon y\mapsto 
		\varphi(y)-\dfrac{\mu}{2}\|y\|^2.
	\end{equation}
	Since $\varphi$ is G\^ateaux differentiable, then $\phi_{\mu}$ is G\^ateaux differentiable and $\nabla\phi_{\mu}=\nabla\varphi-\mu\Id$. Hence, we 
	obtain from 
	\eqref{e:condgrad} that, for all $y,z\in B_M(x)$,
	\begin{equation}
		\label{e:condgradphi}
		0 \leq \scal{y-z}{\nabla \phi_{\mu}(y)-  
			\nabla \phi_{\mu}(z)} \leq 
		(L_{M}(x)-\mu) \|y-z\|^2.
	\end{equation}
	Therefore, it follows from \cite[Proposition~17.7(iii)]{19.Livre1} that
	$\phi_{\mu}|_{B_M(x)}$ and 
	$\big((L_M(x)-\mu)\|\cdot\|^2/2
	-\phi_{\mu}\big)|_{B_M(x)}$ are convex. Hence, Theorem~\ref{t:VPA} implies that,  for all $y,z\in B_M(x)$, 
	\begin{equation}
		\label{e:cocophi}
		\scal{ 
			y-z}{\nabla\phi_{\mu}(y)-
			\nabla\phi_{\mu}(z)}\ge 
		\dfrac{1}{L_M(x)-\mu}\|\nabla\phi_{\mu}(y)-
		\nabla\phi_{\mu}(z)\|^2.
	\end{equation}
	Now, fix $y$ and $z$ in 
	$B_M(x)$ and  let $\gamma>0$.
	It follows from \eqref{eq:prox_gamma_f_firm_nonexp},
	the identity 
	$\Id-\gamma\nabla\varphi=(1-\gamma\mu)\Id-
	\gamma\nabla\phi_{\mu}$, and \eqref{e:cocophi} that
	\begin{align}
		\hspace{-.9cm}&\|T_{\gamma}y-T_{\gamma}z\|^2\nonumber\\
		&\le 
		\|y-z-\gamma(\nabla\varphi(y)-
		\nabla\varphi(z))\|^2
		-\|y-T_{\gamma}y
		-(z-T_{\gamma}z)
		-\gamma(\nabla\varphi(y)-
		\nabla\varphi(z))\|^2\nonumber\\
		&= 
		\|(1-\gamma\mu)(y-z)-\gamma
		(\nabla\phi_{\mu}(y)-
		\nabla\phi_{\mu}(z))\|^2\nonumber\\
		&\hspace{4cm}-\|y-T_{\gamma}y
		-(z-T_{\gamma}z)
		-\gamma(\nabla\varphi(y)-
		\nabla\varphi(z))\|^2\nonumber\\
		&= 
		(1-\gamma\mu)^2\|y-z\|^2-2\gamma
		(1-\gamma\mu)
		\scal{y-z}{\nabla\phi_{\mu}(y)-
			\nabla\phi_{\mu}(z)}\nonumber\\
		&\hspace{1cm}+\gamma^2\|\nabla\phi_{\mu}(y)-
		\nabla\phi_{\mu}(z)\|^2-\|y
		-T_{\gamma}y
		-(z-T_{\gamma}z)
		-\gamma(\nabla\varphi(y)-
		\nabla\varphi(z))\|^2\nonumber\\
		&\le 
		(1-\gamma\mu)^2\|y-z\|^2\nonumber\\
		&\hspace{1cm}-\big(2\gamma(1-\gamma\mu)
		-\gamma^2(L_M(x)-\mu)\big)
		\scal{y-z}{\nabla\phi_{\mu}(y)-
			\nabla\phi_{\mu}(z)}\nonumber\\
		&\hspace{4cm}-\|y-T_{\gamma}y
		-(z-T_{\gamma}z)
		-\gamma(\nabla\varphi(y)-
		\nabla\varphi(z))\|^2\nonumber\\
		&= 
		(1-\gamma\mu)^2\|y-z\|^2
		-\gamma\big(2-\gamma(L_M(x)+\mu)\big)
		\scal{y-z}{\nabla\phi_{\mu}(y)-
			\nabla\phi_{\mu}(z)}\nonumber\\
		&\hspace{4cm}-\|y-T_{\gamma}y
		-(z-T_{\gamma}z)
		-\gamma(\nabla\varphi(y)-
		\nabla\varphi(z))\|^2.\label{e:auxfin}
	\end{align}
	
	\eqref{p:contractiveii}: Since \eqref{e:auxfin} holds for every 
	$\mu\in \left[0,\mu_M(x)\right]$,
	by setting $\mu=0$ we have 
	$\nabla\phi_{\mu}=\nabla\varphi$
	and the result follows from \eqref{e:cocophi} and \eqref{e:auxfin}.
	
	\eqref{p:contractivei}: We have two cases. 
	\begin{itemize}
		\item Suppose that $\gamma\in\left]0, 
		2/(L_M(x)+\mu)\right]$:
		Since the convexity of $\phi_{\mu}$ implies 
		$\scal{y-z}
		{\nabla\phi_{\mu}(y)-
			\nabla\phi_{\mu}(z)}\ge 0$, we deduce from 
		\eqref{e:auxfin} 
		that
		\begin{equation}\label{1}
			\|T_{\gamma}y-T_{\gamma}z\|\le
			|1-\gamma\mu|\|y-z\|.
		\end{equation}
		\item Suppose that 
		$\gamma\in\left]2/(L_M(x)+\mu)
		,2/L_M(x)\right[$:
		It follows from \eqref{e:condgradphi} and  \eqref{e:auxfin} that
		\begin{align}\label{2}
			\|T_{\gamma}y-T_{\gamma}z\|^2
			&\le 
			\big((1-\gamma\mu)^2+\gamma(L_M(x)
			-\mu)(\gamma 
			(L_M(x)+\mu)-2)\big)
			\|y-z\|^2\nonumber\\
			&=\big(1-\gamma L_M(x)\big)^2\|y-z\|^2.
		\end{align}
	\end{itemize}
	Since previous results hold for every $\mu\in 
	\left[0,\mu_M(x)\right]$, by setting 
	$\mu=\mu_M(x)$, the result follows from \eqref{1} and 
	\eqref{2}.
\end{proof}

Now we state our main result.
\vspace{0.2cm}
\begin{theorem} 
	\label{t:conv}
	In the context of Problem~\ref{prob:main}, let $x_0\in 
	\HH$, let $\varepsilon>0$,
	set $M_0:=\sup_{x^*\in 
		S}\|x^* -x_0\|+\varepsilon$,
	and suppose that 
	$\mathcal{L}_{M_0}:=\sup_{x^*\in 
		S}L_{M_0}(x^*)\in\RPP$.
	Let $\gamma\in  
	\left]0,2/\mathcal{L}_{M_0}\right[$, and
	consider the routine 
	\begin{equation}
		\label{e:FBS}
		(\forall n\in\NN)\quad 
		x_{n+1}=\prox_{\gamma\psi}\big(x_n-
		\gamma\nabla \varphi(x_n)\big).
	\end{equation} 
	Then, the following holds:
	\begin{enumerate}[{\rm(i)}]
		\item 
		\label{t:convi}
		There exists $x^*\in S$ such that  
		$(x_n)_{n\in\NN} \subset 
		B_{M_0}(x^*)$ and
		$x_n\weakly x^*$.
		\item 
		\label{t:convii}
		Suppose that there exists $x^*\in S$ such that
		$\mu_{M_0}(x^*)>0$. 
		Then, $S=\{x^*\}$, $(x_n)_{n\in\NN} \subset B_{M_0}(x^*)$,
		and 
		\begin{equation}\label{t:linear_convergence}
			(\forall n\in\NN)\quad \|x_n-x^*\|\le 
			\big(\rho_{M_0}(x^*,\gamma)\big)^n\|x_0-
			x^*\|,
		\end{equation}
		where $\rho_{M_0}(x^*,\gamma)\in\left]0,1\right[$ is 
		defined 
		in \eqref{e:defrho}.
	\end{enumerate}
	
\end{theorem}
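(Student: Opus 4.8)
The plan is to prove (i) by showing that $(x_n)_{n\in\NN}$ is Fej\'er monotone with respect to $S$ and that every weak sequential cluster point lies in $S$, and then to invoke the standard weak-convergence principle for Fej\'er monotone sequences. The starting observation is that the definition of $M_0$ gives $\|x_0-x^*\|\le M_0-\varepsilon<M_0$, so $x_0\in B_{M_0}(x^*)$ for \emph{every} $x^*\in S$, while the choice of $\gamma$ gives $\gamma\in\left]0,2/L_{M_0}(x^*)\right[$ for every $x^*\in S$ since $L_{M_0}(x^*)\le\mathcal{L}_{M_0}$. Fixing any $x^*\in S$ and recalling from Proposition~\ref{p:uniq} that $x^*$ is a fixed point of the operator in \eqref{e:defTgamma}, I would argue by induction: assuming $x_n\in B_{M_0}(x^*)$, apply the inequality of Proposition~\ref{p:contractive}(i) with $y=x_n$ and $z=x^*$, and use that the two subtracted terms are nonnegative (the first because $\gamma<2/L_{M_0}(x^*)$) to get $\|x_{n+1}-x^*\|\le\|x_n-x^*\|\le M_0-\varepsilon<M_0$, whence $x_{n+1}\in B_{M_0}(x^*)$. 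This simultaneously shows $(x_n)_{n\in\NN}\subset B_{M_0}(x^*)$ for every $x^*\in S$ and that $(\|x_n-x^*\|)_{n\in\NN}$ is nonincreasing, i.e.\ Fej\'er monotonicity with respect to $S$.

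Next I would extract summability from the same inequality. Summing the telescoping bound over $n$ yields $\sum_n\|\nabla\varphi(x_n)-\nabla\varphi(x^*)\|^2<\pinf$ and $\sum_n\|x_n-x_{n+1}-\gamma(\nabla\varphi(x_n)-\nabla\varphi(x^*))\|^2<\pinf$; hence $\nabla\varphi(x_n)-\nabla\varphi(x^*)\to 0$ and, by the triangle inequality, $\|x_n-x_{n+1}\|\to 0$. To identify cluster points, I would rewrite \eqref{e:FBS} via \eqref{eq:prox_char} as $\gamma^{-1}(x_n-x_{n+1})-\nabla\varphi(x_n)\in\partial\psi(x_{n+1})$ and add $\nabla\varphi(x_{n+1})$ to obtain
\begin{equation}
	v_n:=\frac{x_n-x_{n+1}}{\gamma}-\nabla\varphi(x_n)+\nabla\varphi(x_{n+1})\in\partial(\psi+\varphi)(x_{n+1}),
\end{equation}
using the sum rule, which is valid since $\dom\varphi=\HH$. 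The two limits above force $v_n\to 0$ strongly. If $x_{n_k}\weakly\bar x$ along a subsequence, then $x_{n_k+1}\weakly\bar x$ as well, and the sequential weak-strong closedness of the graph of the maximally monotone operator $\partial(\psi+\varphi)$ gives $0\in\partial(\psi+\varphi)(\bar x)$, i.e.\ $\bar x\in S$ by Fermat's rule. Since $S$ is closed and convex and $(x_n)_{n\in\NN}$ is Fej\'er monotone with all its weak cluster points in $S$, the sequence converges weakly to some $x^*\in S$; the inclusion $(x_n)_{n\in\NN}\subset B_{M_0}(x^*)$ for this limit was already established, proving (i).

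For (ii), the assumption $\mu_{M_0}(x^*)>0$ for some $x^*\in S$ immediately yields $S=\{x^*\}$ by the uniqueness part of Proposition~\ref{p:uniq}. The induction of the first paragraph still gives $(x_n)_{n\in\NN}\subset B_{M_0}(x^*)$, so I can apply the contraction estimate of Proposition~\ref{p:contractive}(ii) with $y=x_n$ and $z=x^*$ (a fixed point) to get $\|x_{n+1}-x^*\|\le\rho_{M_0}(x^*,\gamma)\|x_n-x^*\|$; since $\rho_{M_0}(x^*,\gamma)\in\left]0,1\right[$, iterating yields \eqref{t:linear_convergence}.

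I expect the main obstacle to be the cluster-point identification in (i): because the cocoercivity and contraction estimates of Proposition~\ref{p:contractive} are only local (valid on $B_{M_0}(x^*)$), one cannot simply invoke a global demiclosedness of $\Id-T_\gamma$. The argument above circumvents this by passing to the limit in the subdifferential inclusion rather than in $T_\gamma$ itself, which requires only the two strong limits $\|x_n-x_{n+1}\|\to 0$ and $\nabla\varphi(x_n)-\nabla\varphi(x^*)\to 0$ together with the weak-strong closedness of $\gra\partial(\psi+\varphi)$; verifying that these limits hold, and that the iterates never leave the ball on which the estimates are valid, is the crux of the proof.
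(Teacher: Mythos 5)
Your proof is correct, and its skeleton coincides with the paper's: the same induction keeps all iterates in the closed ball $\overline{B}_{M_0-\varepsilon}(x^*)$ for each $x^*\in S$ (so that the local estimates of Proposition~\ref{p:contractive} apply with $\gamma<2/L_{M_0}(x^*)\le 2/\mathcal{L}_{M_0}$... note the inequality goes the right way since $L_{M_0}(x^*)\le\mathcal{L}_{M_0}$), the same Fej\'er inequality yields the two summable series and hence $\nabla\varphi(x_n)\to\nabla\varphi(x^*)$ and $x_n-x_{n+1}\to 0$, the conclusion of (i) is the same Fej\'er-monotonicity lemma, and part (ii) is identical. Where you genuinely depart is the identification of weak cluster points, which you rightly single out as the crux. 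The paper stays entirely within fixed-point theory: it expands $\|x-T_{\gamma}x\|^2$ as in \eqref{e:auxopial} and exploits the nonexpansiveness of $T_{\gamma}$ on $B_{M_0}(x^*)$, which is precisely why it first records that the cluster point lies in the weakly closed ball $\overline{B}_{M_0-\varepsilon}(x^*)$; it then gets $x=T_{\gamma}x$ and concludes $x\in S$ via Proposition~\ref{p:uniq}. You instead pass to the inclusion $v_n\in\partial(\psi+\varphi)(x_{n+1})$ with $v_n\to 0$ strongly and invoke the sequential weak--strong closedness of the graph of the maximally monotone operator $\partial(\psi+\varphi)$. Both routes are valid; yours buys robustness, since maximal monotonicity is a global property and you never need the cluster point to sit inside the ball on which the local cocoercivity estimates hold, whereas the paper's computation is self-contained, reusing only Proposition~\ref{p:contractive} and elementary identities without any subdifferential calculus. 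One point to tighten in your write-up: the sum rule $\partial(\psi+\varphi)=\partial\psi+\nabla\varphi$ is not automatic from $\dom\varphi=\HH$ alone in infinite dimensions (a finite-valued convex function need not be continuous); here it is legitimate because \eqref{e:condgrad} together with Theorem~\ref{t:VPA} makes $\nabla\varphi$ Lipschitz on balls, hence $\varphi$ is locally Lipschitz and in particular continuous, so the Moreau--Rockafellar condition holds and, moreover, $\psi+\varphi\in\Gamma_0(\HH)$, so $\partial(\psi+\varphi)$ is indeed maximally monotone. With that justification supplied, your argument is complete.
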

\begin{proof}
	Fix $x^*\in S$. We have 
	$\gamma\in 
	\left]0,2/L_{M_0}(x^*)\right[$ and Proposition~\ref{p:uniq}
	yields $x^*=T_{\gamma}x^*$. Let us first 
	prove by recurrence
	that $(x_n)_{n\in\NN}\subset 
	\overline{B}_{M_0-\varepsilon}(x^*)$. Indeed, since $\|x_0-x^*\|\le\sup_{y^*\in S}\|x_0-y^*\|=M_0-\varepsilon$,
	$x_0\in \overline{B}_{M_0-\varepsilon}(x^*)$. Suppose that 
	$x_n\in \overline{B}_{M_0-\varepsilon}(x^*)$ for some $n\in\NN$.
	Then, it follows from 
	\eqref{e:defTgamma}
	and Proposition~\ref{p:contractive}\eqref{p:contractiveii} that
	\begin{equation}
		\label{e:recurr}
		\|x_{n+1}-x^*\|
		=\|T_{\gamma}x_n-T_{\gamma}x^*\|
		\le\|x_n-x^*\|\le M_0-\varepsilon
	\end{equation}
	and, hence, $x_{n+1}\in \overline{B}_{M_0-\varepsilon}(x^*)$.
	Therefore, we conclude $(x_n)_{n\in\NN}\subset 
	\overline{B}_{M_0-\varepsilon}(x^*)$ and 
	Proposition~\ref{p:contractive}\eqref{p:contractiveii} yields, for every 
	$n\in\NN$, 
	\begin{align}
		\|x_{n+1}-x^*\|^2
		&=\|T_{\gamma}x_n-T_{\gamma}x^*\|^2\nonumber\\
		&\le\|x_n-
		x^*\|^2-
		\gamma\big(2/L_{M_0}(x^*)-\gamma\big)
		\|\nabla\varphi(x_n)-\nabla\varphi(x^*)\|^2
		\nonumber\\
		&\hspace{3cm}-\|x_n-x_{n+1}
		-\gamma(\nabla\varphi(x_n)-
		\nabla\varphi(x^*))\|^2.
	\end{align}
	We deduce from \cite[Lemma~5.31]{19.Livre1} that 
	$(\|x_n-x^*\|)_{n\in\NN}$
	is convergent and that 
	\begin{equation}
		\sum_{n\in\NN}\|\nabla \varphi(x_n)-\nabla 
		\varphi(x^*)\|^2<\pinf
	\end{equation}
and 
\begin{equation}
		\sum_{n\in\NN}\|x_n-x_{n+1}
		-\gamma(\nabla\varphi(x_n)-
		\nabla\varphi(x^*))\|^2<\pinf,
	\end{equation}
	which yields $\nabla \varphi(x_n)\to \nabla 
	\varphi(x^*)$ and 
	\begin{equation}
		\label{e:to0}
		x_n-x_{n+1}
		=(\Id-T_{\gamma})x_n\to 0.
	\end{equation}
	Now, let $x$ be a weak accumulation point of 
	$(x_n)_{n\in\NN}$, say $x_{k_n}\weakly 
	x$. Since  $\overline{B}_{M_0-\varepsilon}(x^*)$ is weakly 
	closed, $x$ and $(x_{k_n})_{n\in\NN}$
	are in $\overline{B}_{M_0-\varepsilon}(x^*)$. Hence, 
	it follows from the nonexpansiveness of 
	$T_{\gamma}$ in $B_{M_0}(x^*)$, guaranteed by 
	Proposition~\ref{p:contractive}\eqref{p:contractiveii}, that
	\begin{align}
		\label{e:auxopial}
		&\|x-T_{\gamma}x\|^2\nonumber\\
		=&\|x_{k_n}-T_{\gamma}x\|^2
		-\|x_{k_n}-x\|^2-
		2\scal{x_{k_n}-x}{x-
			T_{\gamma}x}\nonumber\\
		=&\|x_{k_n}-T_{\gamma}x_{k_n}\|^2
		+\|T_{\gamma}x_{k_n}-T_{\gamma}x\|^2
		+2\scal{x_{k_n}-
			T_{\gamma}x_{k_n}}
		{T_{\gamma}x_{k_n}-T_{\gamma}x}\nonumber\\
		&\hspace{2.8cm}-\|x_{k_n}-x\|^2-
		2\scal{x_{k_n}-x}{x-
			T_{\gamma}x}\nonumber\\
		\le& \|(\Id-T_{\gamma})x_{k_n}\|^2
		+2\scal{(\Id-T_{\gamma})x_{k_n}}
		{x_{k_n+1}-T_{\gamma}x}-
		2\scal{x_{k_n}-x}{x-
			T_{\gamma}x}.
	\end{align}
	Therefore, \eqref{e:to0}, $x_{k_n}\weakly 
	x$, and boundedness of 
	$(x_{k_n})_{n\in\NN}$ imply that the right hand side 
	of \eqref{e:auxopial} tends to $0$ as $n\to\pinf$, which yields 
	$x=T_{\gamma}x$ and, thus, 
	$x\in S$, in view of Proposition~\ref{p:uniq}. Then, 
	\eqref{t:convi} follows from 
	\cite[Lemma~2.47]{19.Livre1}.
	
	\eqref{t:convii}: The uniqueness of the solution $x^*$ 
	follows from Proposition~\ref{p:uniq}. Since 
	$\mu_{M_0}(x^*)>0$, 
	Proposition~\ref{p:contractive}\eqref{p:contractivei} implies that 
	$(x_n)_{n\in\NN}\subset B_{M_0}(x^*)$,
	following the same argument used in \ref{t:convi}. Therefore,
	for every $n\in\NN$,
	\begin{equation}
		\|x_{n+1}-x^*\|
		=\|T_{\gamma}x_n-T_{\gamma}x^*\|
		\le\rho_{M_0}(x^*,\gamma)\|x_n-
		x^*\|\le
		\big(\rho_{M_0}(x^*,\gamma)\big)^{n+1}
		\|x_0-x^*\|
	\end{equation}
	and the proof is complete.
\end{proof}
\vspace{0.2cm}

\begin{remark}\phantom{ }
	\label{rem:optrate}
	\begin{enumerate}[{\rm(i)}]
		\item 
		\label{rem:optratei}
		Note that, for every $x^*$ and $M>0$, 
		$\rho_M(x^*,\cdot)$ is decreasing in 
		$\left]0,2/(L_M(x^*)+\mu_M(x^*))\right]$
		and increasing in 
		$\left]2/(L_M(x^*)+\mu_M(x^*)),
		2/L_M(x^*)\right[$. Therefore,
		the optimal convergence rate is obtained by choosing 
		$\gamma^*=2/(L_M(x^*)+\mu_M(x^*))$, 
		which yields
		\begin{equation}
			\rho_M(x^*,\gamma^*)=\dfrac{L_M(x^*)-\mu_M(x^*)}
			{L_M(x^*)+\mu_M(x^*)}.
		\end{equation}
		
		\item In the particular case when $\nabla\varphi$ is globally Lipschitz continuous with constant $L>0$, we have, for every $M>0$, $L_M(\cdot)\equiv\mathcal{L}_{M}=L$ and we recover the convergence of the classical FBS {\rm(}see, e.g., {\rm\cite[Theorem~3.4]{mms05}}{\rm)}. In addition, if $\varphi$ is $\rho$-strongly convex in $\HH$ for some $\rho>0$, we recover the linear convergence rate in 
{\rm\cite{Nelly,Taylor}}.
		
		\item In general, $M_0$ and $\mathcal{L}_{M_0}$ in 
		Theorem~\ref{t:conv} are difficult to compute exactly since the 
		solution set $S$ is not known. However,
		it is possible to over-estimate them by knowing a priori bounds on the 
		solutions. Indeed, if $S\subset U$, where $U\subset\HH$ is an a 
		priori region in which the solutions are known to be, we have
		\begin{equation}
			M_0\le M:=\sup_{x\in 
				U}\|x-x_0\|\quad \text{and}\quad 
			\mathcal{L}_{M_0}\le \sup_{x\in U}L_{M}(x).
		\end{equation}
		In the locally strongly convex case, if the unique solution 
		$x^*$ is 
		known to be in an a priori set $U$, we can also under-estimate 
		$\mu_M(x^*)$ by $\inf_{x\in 
			U}\mu_M(x)$ provided that the latter is strictly positive.
	\end{enumerate}
\end{remark}
\section{Application to ergodic variational mean field games}
\label{sec:application_mfgs}
Consider the following stationary MFGs system with monotone local couplings
\begin{equation}
	\label{MFG_ergodic_system}
	\begin{array}{rll}
		- \nu \Delta u + \mathscr{H}(\nabla u) + \lambda \; 
		&= f(x,m(x))  &\text{in } \TT^d, \\[5pt]
		- \nu \Delta m - \text{div}\left(\nabla \mathscr{H}(\nabla 
		u)m\right) \; \; &=0 \qquad &\text{in } \TT^d, \\[5pt]
		\int_{\TT^d} u(x) {\rm d} x=0, \:\:  \int_{\TT^d}m(x){\rm d} x=1,
		& m>0.
	\end{array}
\end{equation}

Here, $\nu>0$, $\TT^{d}$ denotes the $d$-dimensional torus, $u,\,m\colon\TT^{d}\to\RR$ and $\lambda\in \RR$ are the unknowns,  the {\it Hamiltonian}  $\mathscr{H}\colon\RR^{d}\to\RR$ is convex and differentiable, and $\TT^{d}\times\RPP\ni(x,\rho)\mapsto f(x,\rho)\in\RR$ is continuous and strictly increasing with respect to $r$. Moreover, we suppose that
\begin{align}
	(\forall x\in\TT^{d})(\forall \rho\in\RPP)\quad\int_{0}^{\rho}f(x,\rho')\,{\rm d} \rho'<\infty,
	\label{eq:integrability_condition}\\
	(\forall x\in\TT^{d})\quad\lim_{\rho\to\pinf}f(x,\rho)=\pinf.
	\label{eq:f_unbounded}
\end{align}

Under suitable assumptions on the growth of $\mathscr{H}$ and on the regularity of $f$, system~\eqref{MFG_ergodic_system} corresponds to the optimality system of a convex variational problem (see e.g.~\cite{Lasry_Lions_2007,meszaros_silva_2018})  and admits a unique smooth solution $(u^{*},m^{*},\lambda^{*})$ (see e.g.~\cite{MR2928380,MR3333058,MR3415027,MR3160525,MR2928381,MR3623401}). 

In this section, we consider the finite difference scheme proposed in~\cite{Achdou_Capuzzo_Dolcetta_2010} to approximate the solution to~\eqref{MFG_ergodic_system} in the two-dimensional case $d=2$ and when 
\begin{equation}
	\label{hyp:assumption_H}
	(\forall\,p\in\RR^{d})\quad\mathscr{H}(p)=\ell^{*}(\|p\|),
\end{equation}
where  $\ell\in\Gamma_{0}(\RR)$ is non-negative, supercoercive, increasing, and strictly convex on its domain $\dom \ell=\RP$. The non-negativity of $\ell$ over its domain yields that $\ell^{*}$ is increasing. Moreover, by~\eqref{eq:consequence_supercoercive} and~\cite[Corollary~17.44]{19.Livre1}, the supercoercivity and the strict convexity of $\ell$ imply that $\dom \ell^*=\RR$ and that $\ell^{*}$ is differentiable on $\RR$.
\vspace{0.2cm}
\begin{remark} Let $\gamma\in[1,\pinf[$ and set $\gamma'=\gamma/(\gamma-1)$. 
	A typical example of a Hamiltonian satisfying the previous assumptions is given by $\mathscr{H}(p)=\|p\|^{\gamma'}/\gamma'$ for all $p\in\RR^{d}$, which is obtained from~\eqref{hyp:assumption_H} with $\ell(r)=|r|^{\gamma}/\gamma+\iota_{\RP}(r)$ for all $r\in\RR$.  
\end{remark}

Let $N\in\NN$, set $h=1/N$ and set $\TT_{h}=\{x_{i,j}=(ih,jh)\,|\,i,\,j=0,\hdots,N-1\}$. Let $\M_{h}$ be the set of real-valued functions defined on $\TT_{h}$, set $\mathcal{W}_h= 
\M_h^4$, and set $\Y_h=\{z\in\M_{h}\,|\,
\sum_{i,j=0}^{N-1}z(x_{i,j})=0\}$. For notational simplicity, given $z\in\M_{h}$, we will write $z_{i,j}$ for $z(x_{i,j})$. The discrete differential operators $D_1\colon \mathcal{M}_h\to\mathcal{Y}_h$,
$D_2\colon \mathcal{M}_h\to\mathcal{Y}_h$, $D_h\colon 
\mathcal{M}_h\to\mathcal{Y}_h^4$, $\Delta_h\colon 
\mathcal{M}_h\to\mathcal{Y}_h$, and 
$\mbox{div}_h\colon\mathcal{W}_h\to 
\mathcal{Y}_h$ are defined as
\begin{align}
	(D_1 z)_{i,j}&= \frac{z_{i+1,j}-z_{i,j}}{h}, \quad (D_2 
	z)_{i,j}=\frac{z_{i,j+1}-z_{i,j}}{h},\nonumber\\[4pt]
	[D_hz]_{i,j}&=\big((D_1 z)_{i,j}, (D_1 z)_{i-1,j}, (D_2 z)_{i, j}, (D_2 
	z)_{i,j-1}\big),\nonumber\\[4pt]
	(\Delta_h z)_{i,j}&= \frac{z_{i-1,j} 
		+z_{i+1,j}+z_{i,j-1}+z_{i,j+1}-4z_{i,j}}{h^2},\nonumber\\[4pt]
	(\text{div}_{h}(w))_{i,j}&= (D_1 w^1)_{i-1, j} + 
	(D_1w^2)_{i,j}+(D_2w^3)_{i,j-1}+ (D_2w^4)_{i,j},\nonumber
\end{align}
for all  $z\in\mathcal{M}_h$, $w\in\mathcal{W}_h$, $i$, $j=0,\hdots, N-1$, and the  sums between the indexes are taken modulo $N$.  

Let us set $\mathbb{R}_+=\RP$ and  $\mathbb{R}_-=\RM$. 
Let $K:=\mathbb{R}_+\times \mathbb{R}_- \times \mathbb{R}_+ 
\times \mathbb{R}_-\subset \RR^4$, denote by $P_{K}(p)$ the euclidean projection of $p\in\RR^{4}$ onto the closed and convex cone $K$, and set 
\begin{equation}
	\label{def:kappa}
	(\forall C\in\{K,\RR^{4}\})\quad
	\zeta_{C}\colon\RR^{4}\to\RR^{4}\colon\xi\mapsto 
	\begin{cases}
		\displaystyle \frac{{\ell^*}'(\|P_C\xi\|)}{\|P_C\xi\|}P_C\xi,&\text{if}\:\: P_C\xi\ne 0;\\
		0,&\text{if}\:\: P_C\xi= 0.
	\end{cases}
\end{equation}

The finite difference discretization of system~\eqref{MFG_ergodic_system} reads as follows:
\begin{equation}
	\label{eq:finite_difference_scheme}
	\begin{array}{l}
		\displaystyle-\nu(\Delta_h u)_{i,j} +\ell^{*}\big(\|P_{K}\big(-[D_hu]_{i,j}\big)\|\big)+\lambda=  
		f(x_{i,j},m_{i,j}),\quad\text{for all }i,\,j=0,\hdots,N-1,\\[14pt]
		\displaystyle-\nu(\Delta_h m)_{i,j} +\displaystyle\big(\text{div}_h\big(m\,\zeta_{K}(-[D_hu ])\big)\big)_{i,j}=0,\quad\text{for all }i,\,j=0,\hdots,N-1, \\[10pt]
		\displaystyle\sum_{i,j=0}^{N-1}u_{i,j} =0,\;\; h^2 \sum_{i,j=0}^{N-1}m_{i,j} =1,\;\;m_{i,j} > 0,\;\;\;\text{for all }i,\,j=0,\hdots,N-1.
	\end{array}
\end{equation}

As for~\eqref{MFG_ergodic_system}, system~\eqref{eq:finite_difference_scheme} corresponds to the optimality condition for the solution to a convex variational problem. In order to define this problem, set 
\begin{equation}
	b\colon\RR\times\RR^4\to\RX\colon (\rho,\omega)\mapsto
	\begin{cases}
		\rho\,\funp\left(\dfrac{\|\omega\|}{\rho}\right),\quad&\text{if 
		}\rho>0;\\
		0,&\text{if }(\rho,\omega)=(0,0);\\
		\pinf&\text{otherwise}
	\end{cases} 
\end{equation} 
and, for every $i$, $j=0,\hdots,N-1$, define 
\begin{equation}
	\label{def:F_ij}
	F_{i,j}(\rho)=\begin{cases}
		\displaystyle
		\int_{0}^{\rho}f(x_{i,j},\rho') {\rm d}\rho', &\text{if } \rho\in\RP;\\
		\pinf, &\text{otherwise}.
	\end{cases}
\end{equation}
Notice that~\eqref{eq:integrability_condition} yields $\dom F_{i,j}=\RP$, and, since $f(x_{i,j},\cdot)$ is strictly increasing, the function $F_{i,j}$ is strictly convex. Moreover, it follows from~\eqref{eq:integrability_condition},~\eqref{eq:f_unbounded}, and~\eqref{def:F_ij} that $F_{i,j}$ is supercoercive. Thus, as for $\ell$, we have that $\dom F_{i,j}^{*}=\RR$ and $F_{i,j}^{*}$ is increasing and differentiable on $\RR$. 
\vspace{0.2cm}
\begin{problem}
	\label{prob:MFG}
	The problem is to
	\begin{align}
		&\min_{(m,w)\in\M_h\times \mathcal{W}_h}  
		\sum_{i,j=0}^{N-1}\Big(b(m_{i,j},w_{i,j})+F_{i,j}(m_{i,j})\Big)\nonumber\\
		&\qquad \;  \text{s.t.} \quad 
		\begin{cases}
			-\nu\Delta_hm + \mbox{{\rm div}}_h w=0,\\[4pt]
			\displaystyle h^2\sum_{i,j=0}^{N-1}m_{i,j}= 1,\\[10pt]
			w_{i,j}\in K,\quad \text{for all }i,j=0,\hdots,N-1.
		\end{cases}
		\label{general_discrete_formulation}
	\end{align}
\end{problem}

Note that, in view of~\cite{Rocky_66}, $b\in\Gamma_{0}(\RR\times\RR^{4})$ and hence Problem 4.2 is convex.
\vspace{0.2cm}

\begin{proposition} 
	\label{prop:optimality_condition_discrete_problem}
	System~\eqref{eq:finite_difference_scheme} has a unique solution $(m^{h},u^{h},\lambda^{h})$. Moreover, setting
	\begin{equation}
		\label{eq:optimal_w_h}
		w^{h}=m^{h}\zeta_{K}(-[D_hu^{h} ]),
	\end{equation}
	we have that $(m^{h},w^{h})$ is the unique solution to Problem~\ref{prob:MFG}. 
\end{proposition}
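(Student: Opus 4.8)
The plan is to recognize system~\eqref{eq:finite_difference_scheme} as the Karush--Kuhn--Tucker system of the convex program in Problem~\ref{prob:MFG} and to transfer existence and uniqueness between the two formulations. First I would show, by the direct method of the calculus of variations, that Problem~\ref{prob:MFG} has a unique minimizer. The objective $\Phi\colon(m,w)\mapsto\sum_{i,j}\big(b(m_{i,j},w_{i,j})+F_{i,j}(m_{i,j})\big)$ lies in $\Gamma_0(\M_h\times\mathcal{W}_h)$ because $b\in\Gamma_0(\RR\times\RR^4)$ and each $F_{i,j}$ is proper, convex and lower semicontinuous, and the feasible set is closed, convex and nonempty, since $(m,w)=(1,0)$ satisfies all constraints. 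Because $\dom b\subset\RP\times\RR^4$ and $\dom F_{i,j}=\RP$, feasibility forces $0\le m_{i,j}\le 1/h^2$, so the $m$-component of any minimizing sequence is bounded; the supercoercivity of $\ell$ makes $b(\rho,\cdot)$ superlinear uniformly for $\rho$ in this bounded range, which bounds the $w$-component as well. Finite-dimensional compactness together with lower semicontinuity then yields a minimizer. Uniqueness of the $m$-component is immediate from the strict convexity of each $F_{i,j}$, and, once $m^{h}>0$ is established, strict convexity of $\omega\mapsto b(m^{h}_{i,j},\omega)$ forces uniqueness of the $w$-component on the affine feasible set.

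Next I would write the optimality conditions. Because $K$ is polyhedral, the remaining constraints are linear, and the feasible point $(1,0)$ lies in $\reli\dom\Phi$, the subdifferential sum rule and Fermat's rule~\cite[Theorem~16.3]{19.Livre1} apply without a Slater condition. Introducing a multiplier $u\in\M_h$ for $-\nu\Delta_hm+\mathrm{div}_hw=0$ and $\lambda\in\RR$ for the mass constraint, and encoding $w_{i,j}\in K$ through $\iota_K$, stationarity splits into a condition in $m$ and a condition in $w$. Using the discrete integration-by-parts identity that makes $-D_h$ the adjoint of $\mathrm{div}_h$ and $\Delta_h$ self-adjoint, the stationarity in $m$ will produce the discrete Hamilton--Jacobi--Bellman equation (first line of~\eqref{eq:finite_difference_scheme}), stationarity in $w$ will relate $w^{h}$ to $-[D_hu]$, and primal feasibility reproduces the Kolmogorov equation and the normalizations.

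The crux, and the step I expect to be the main obstacle, is the explicit computation tying $b$, the cone $K$, and the map $\zeta_K$ of~\eqref{def:kappa} together. Here I would use that $b$ is the perspective of $\omega\mapsto\ell(\|\omega\|)$, so that, for fixed $\rho>0$, the partial conjugate is $b(\rho,\cdot)^*=\rho\,\ell^*(\|\cdot\|)$. Adding the constraint $w\in K$ and using $\sigma_K=\iota_{K^-}$ together with the Moreau decomposition $\xi=P_K\xi+P_{K^-}\xi$ gives $\big(b(\rho,\cdot)+\iota_K\big)^*(\xi)=\rho\,\ell^*\big(\|P_K\xi\|\big)$, since minimizing $\|\xi-\xi_2\|$ over $\xi_2\in K^-$ yields $d_{K^-}(\xi)=\|P_K\xi\|$ and $\ell^*$ is increasing. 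Evaluating at $\xi=-[D_hu]$ recovers exactly the Hamiltonian term $\ell^*\big(\|P_K(-[D_hu])\|\big)$ of the scheme, and the associated primal recovery is $w^{h}=m^{h}\,\nabla_\xi\ell^*(\|P_K\xi\|)$; the chain rule, with $\nabla_\xi\tfrac12\|P_K\xi\|^2=P_K\xi$, identifies $\nabla_\xi\ell^*(\|P_K\xi\|)$ with $\zeta_K(\xi)$, giving $w^{h}=m^{h}\zeta_K(-[D_hu^{h}])$ as in~\eqref{eq:optimal_w_h}. The envelope relation $\partial_\rho b(\rho,\omega^{h})=-\ell^*(\|P_K\xi\|)$ at the optimum then feeds the stationarity in $m$ to produce the HJB equation with multiplier $\lambda$; the sign bookkeeping in the adjoints is routine but delicate and must be tracked carefully.

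Finally I would establish $m^{h}>0$ and close the equivalence. Positivity follows from a discrete maximum principle applied to the second equation of~\eqref{eq:finite_difference_scheme}, which is linear in $m$ with a connected, irreducible stencil, so that $m\ge0$ with $h^2\sum_{i,j}m_{i,j}=1$ forces $m_{i,j}>0$ at every node; this also secures the uniqueness of the $w$-component claimed above, and lets one invoke~\eqref{eq:f_unbounded} to exclude $m=0$ as optimal. Normalizing $u$ to zero mean identifies the optimizer together with its multipliers with a triple $(m^{h},u^{h},\lambda^{h})$ solving~\eqref{eq:finite_difference_scheme}. Conversely, reversing each step shows that any solution of~\eqref{eq:finite_difference_scheme} yields, through~\eqref{eq:optimal_w_h}, an optimizer of Problem~\ref{prob:MFG} with its multipliers; since the optimizer is unique, so is the solution of the scheme, which completes the proof.
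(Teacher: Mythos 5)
The paper itself does not spell out a proof of this proposition: it simply cites \cite[Proposition~4.1]{MR4534446} for the case $\mathscr{H}=\|\cdot\|^{2}/2$ and asserts that the general case follows by identical arguments. Your KKT/duality reconstruction is therefore the right genre of argument, and most of it is sound: the direct method (using that $(m,w)=(1,0)$ is feasible with finite cost, that the mass constraint bounds $m$, and that supercoercivity of $\ell$ bounds $w$), strict convexity of the $F_{i,j}$ and of $b(\rho,\cdot)$ for $\rho>0$, and your partial-conjugate identity $\big(b(\rho,\cdot)+\iota_K\big)^*(\xi)=\rho\,\ell^*(\|P_K\xi\|)$, which is exactly the computation the paper carries out in Proposition~\ref{p:0}. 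Two steps, however, have genuine gaps.

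First, your positivity argument is circular as stated. You apply a discrete maximum principle to ``the second equation of~\eqref{eq:finite_difference_scheme}'', but in the direction optimizer~$\Rightarrow$~scheme you need $m^{h}>0$ \emph{before} the scheme is available: at a node with $m_{i,j}=0$, stationarity in $m$ yields only an inequality, and the relation $w^{h}=m^{h}\zeta_{K}(-[D_hu^{h}])$ is part of what you are trying to derive. The repair works at the level of feasibility alone: for $\nu>0$, any feasible $(m,w)$ with finite cost has $w_{i,j}=0$ wherever $m_{i,j}=0$ (from $\dom b$); evaluating $-\nu\Delta_h m+\mathrm{div}_h w=0$ at such a node, the sign structure of $K$ makes every term nonpositive, forcing the neighboring values of $m$ to vanish, and connectedness of the periodic grid propagates $m\equiv 0$, contradicting $h^{2}\sum_{i,j}m_{i,j}=1$. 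Second, and more seriously, the final inference ``since the optimizer is unique, so is the solution of the scheme'' is invalid: multipliers of a convex program are in general not unique, so uniqueness of $(m^{h},w^{h})$ does not by itself give uniqueness of $(u^{h},\lambda^{h})$. Moreover $\mathscr{H}=\ell^{*}(\|\cdot\|)$ need not be strictly convex ($\ell$ strictly convex does not make $\ell^{*}$ strictly convex), so even though uniqueness of $w^{h}$ and $m^{h}>0$ give $\zeta_{K}(-[D_hu_1])=\zeta_{K}(-[D_hu_2])=:g$, you cannot conclude $[D_hu_1]=[D_hu_2]$ directly. A correct closing argument: by Fenchel equality the difference of the Hamiltonian terms is linear in $[D_h(u_1-u_2)]$ with slope $g$, so $v=u_1-u_2$ satisfies $-\nu\Delta_h v-\scal{g}{[D_hv]}+(\lambda_1-\lambda_2)=0$; evaluating at maximizing and minimizing nodes of $v$ and using that $g\in K$ has the sign pattern $(+,-,+,-)$ yields $\lambda_1=\lambda_2$, then equality in the Laplacian term at the max node propagates to show $v$ is constant, hence $v=0$ by the normalization $\sum_{i,j}u_{i,j}=0$. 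With these two repairs your plan matches the argument of the cited reference.
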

\begin{proof}
	When $\mathscr{H}=\|\cdot\|^{2}/2$, the proof of the previous result is given in~\cite[Proposition~4.1]{MR4534446}. The case when $\mathscr{H}$ is given by~\eqref{hyp:assumption_H} follows from exactly the same arguments. 
\end{proof}

In what follows, our aim will be to apply the results in Section~\ref{sec:general_case} to a dual formulation of Problem~\ref{prob:MFG}. For this purpose, the following result will play an important role. 
\vspace{0.2cm}
\begin{proposition}
	\label{p:0}
	In the context of Problem~\ref{prob:MFG}, let 
	$C\in\{K,\RR^4\}$ and, for every $i,j\in\{0,\ldots,N-1\}$,
	define
	\begin{equation}
		\phi_{i,j}\colon 
		\RR\times\RR^4\to\RX\colon (\rho,\omega)
		\mapsto b(\rho,\omega)+F_{i,j}(\rho)+\iota_{C}(\omega).
	\end{equation}
	Then, for every $(\rho^*,\omega^*)\in \RR\times\RR^4$, the following hold:
	\begin{enumerate}[{\rm(i)}]
		\item 
		\label{p:0i}
		$
		\phi_{i,j}^*
		(\rho^*,\omega^*)=
		F_{i,j}^*\big(\rho^*+\funp^*(\|P_C\omega^*\|)\big).
		$
		\item 
		\label{p:0ii}
		$\phi_{i,j}^*$ is differentiable in $\RR\times\RR^4$ and 
		\begin{equation}
			\label{eq:nabla_phi_etoile}
			\nabla\phi^*_{i,j}(\rho^*,\omega^*)
			={F_{i,j}^*}'(\rho^*+\funp^*(\|P_{C}\omega^*\|))
			\begin{pmatrix}1\\\zeta_{C}(\omega^{*})
			\end{pmatrix},
		\end{equation}
		where $\zeta_{C}$ is defined in~\eqref{def:kappa}.
	\end{enumerate}
\end{proposition}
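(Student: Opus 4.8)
The plan is to compute $\phi_{i,j}^*$ directly from the definition \eqref{eq:conjugate}, which gives assertion (i), and then to deduce the differentiability and the gradient formula \eqref{eq:nabla_phi_etoile} of assertion (ii) by the chain rule. The computation rests on two facts: the variables decouple, because $\iota_C$ constrains only $\omega$ while $F_{i,j}$ involves only $\rho$; and, since $C\in\{K,\RR^4\}$ is a nonempty closed convex cone, the partial supremum over $\omega\in C$ of the perspective term produces the projection $P_C$ and the conjugate $\ell^*$.

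Since $\dom b$ and $\dom F_{i,j}$ force $\rho\ge 0$, I would first write
\[
\phi_{i,j}^*(\rho^*,\omega^*)=\sup_{\rho\ge 0}\Big[\rho^*\rho-F_{i,j}(\rho)+\sup_{\omega\in C}\big(\scal{\omega^*}{\omega}-b(\rho,\omega)\big)\Big].
\]
For $\rho>0$, the substitution $v=\omega/\rho$, which ranges over $C$ because $C$ is a cone, turns the inner supremum into $\rho\sup_{v\in C}(\scal{\omega^*}{v}-\ell(\|v\|))$. The key step is to show this last supremum equals $\ell^*(\|P_C\omega^*\|)$. For the upper bound I would use Moreau's conical decomposition $\omega^*=P_C\omega^*+P_{C^-}\omega^*$, with $\scal{P_C\omega^*}{P_{C^-}\omega^*}=0$ and $\scal{P_{C^-}\omega^*}{v}\le 0$ for $v\in C$: this gives $\scal{\omega^*}{v}\le\scal{P_C\omega^*}{v}\le\|P_C\omega^*\|\,\|v\|$, whence $\scal{\omega^*}{v}-\ell(\|v\|)\le\|P_C\omega^*\|\,\|v\|-\ell(\|v\|)\le\ell^*(\|P_C\omega^*\|)$, using $\dom\ell=\RP$. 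For the matching lower bound I would test the admissible vectors $v=\lambda P_C\omega^*$, $\lambda\ge 0$, which lie in $C$ and satisfy $\scal{\omega^*}{v}=\lambda\|P_C\omega^*\|^2$; letting $\lambda$ vary recovers $\sup_{r\ge 0}(r\|P_C\omega^*\|-\ell(r))=\ell^*(\|P_C\omega^*\|)$. The case $\rho=0$ admits only $\omega=0$ and yields $0$, consistent with the previous expression at $\rho=0$. Substituting back and using $\dom F_{i,j}=\RP$ then gives assertion (i): $\phi_{i,j}^*(\rho^*,\omega^*)=\sup_{\rho\ge 0}[\rho(\rho^*+\ell^*(\|P_C\omega^*\|))-F_{i,j}(\rho)]=F_{i,j}^*(\rho^*+\ell^*(\|P_C\omega^*\|))$.

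For assertion (ii) I would set $G(\rho^*,\omega^*)=\rho^*+\ell^*(\|P_C\omega^*\|)$ and prove that $\omega^*\mapsto\ell^*(\|P_C\omega^*\|)$ is differentiable with gradient $\zeta_C(\omega^*)$; then $\phi_{i,j}^*=F_{i,j}^*\circ G$ is differentiable by the chain rule, since $F_{i,j}^*$ is differentiable on $\RR$, with $\nabla\phi_{i,j}^*={F_{i,j}^*}'(G)\,(1,\zeta_C(\omega^*))$, which is \eqref{eq:nabla_phi_etoile}. Where $P_C\omega^*\ne 0$, the composition rule applies directly: for a cone, $\tfrac12\|P_C\cdot\|^2=\tfrac12\|\cdot\|^2-\tfrac12 d_C^2$ is differentiable with gradient $P_C$, so $\omega^*\mapsto\|P_C\omega^*\|$ is differentiable with gradient $P_C\omega^*/\|P_C\omega^*\|$, and composing with the differentiable $\ell^*$ produces exactly $\zeta_C(\omega^*)$.

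The main obstacle is differentiability at points $\bar\omega^*$ with $P_C\bar\omega^*=0$, where the norm is nonsmooth. Here the structural hypotheses on $\ell$ become essential: since $\ell$ is non-negative and increasing on $\RP$, the minimum of $\ell$ is attained at $0$, so ${\ell^*}'(0)=0$. Combined with the nonexpansiveness of $P_C$, which gives $\|P_C\omega^*\|=\|P_C\omega^*-P_C\bar\omega^*\|\le\|\omega^*-\bar\omega^*\|$, this yields $\ell^*(\|P_C\omega^*\|)-\ell^*(0)=o(\|\omega^*-\bar\omega^*\|)$, so the map is differentiable at $\bar\omega^*$ with gradient $0=\zeta_C(\bar\omega^*)$. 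This is the one place where the specific assumptions on $\ell$ are indispensable, and it is what upgrades $\phi_{i,j}^*$ from differentiability off the kink set to global differentiability.
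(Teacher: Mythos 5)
Your proof is correct, but it takes a genuinely different route from the paper's. For part (i), the paper never computes the supremum directly: it invokes the Fenchel calculus, writing $\phi_{i,j}^*$ as the exact infimal convolution $b^*\infconv\big(F_{i,j}+\iota_C\big)^*$ (justified by a qualification condition, namely continuity of $b$ at a point where $F_{i,j}+\iota_C$ is finite), then uses the known conjugate of the perspective function $b$ (an indicator of the region $\rho^*+\ell^*(\|\omega^*\|)\le\rho$), the identity $\iota_C^*=\iota_{C^-}$, the monotonicity of $t\mapsto F_{i,j}^*(\rho^*+\ell^*(t))$ to reduce the infimum over $\omega\in C^-$ to $d_{C^-}(\omega^*)$, and finally Moreau's conical identity $P_C=\Id-P_{C^-}$. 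You instead decouple the variables, rescale $\omega=\rho v$ using the cone property of $C$, and establish $\sup_{v\in C}(\scal{\omega^*}{v}-\ell(\|v\|))=\ell^*(\|P_C\omega^*\|)$ by hand via the Moreau decomposition (upper bound) and the test directions $v=\lambda P_C\omega^*$ (lower bound); this is more elementary and avoids qualification conditions and exactness of the inf-convolution altogether, at the cost of a longer computation and a couple of boundary cases you handle correctly ($\rho=0$, and, implicitly, the one-point case $P_C\omega^*=0$ in the lower bound, where $v=0$ already attains $\ell^*(0)=-\ell(0)$). For part (ii), the paper simply converts $\|P_C\cdot\|$ into the distance $d_{C^-}$ and cites \cite[Example~17.33]{19.Livre1} together with ${\ell^*}'(0)=0$; you reprove that example's content directly, via $\tfrac12\|P_C\cdot\|^2=\tfrac12\|\cdot\|^2-\tfrac12 d_C^2$ off the kink set and an $o(\|\omega^*-\bar\omega^*\|)$ estimate at points with $P_C\bar\omega^*=0$. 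Your kink-point estimate is sound: since $\ell^*$ is increasing (non-negativity of $\ell$) and minimized at $0$ (as ${\ell^*}'(0)=0$, which follows because differentiability of $\ell^*$ forces $\partial\ell^*(0)$ to be the singleton $\arg\min\ell=\{0\}$), the increment $\ell^*(\|P_C\omega^*\|)-\ell^*(0)$ is non-negative and, by nonexpansiveness of $P_C$, indeed $o(\|\omega^*-\bar\omega^*\|)$. In short: the paper's proof buys brevity and a clear display of where each structural hypothesis enters the conjugate calculus, while yours buys self-containedness, using only Moreau's conical decomposition and first principles.
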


\begin{proof}
	Fix 
	$i$ and $j$ in $\{0,\ldots,N-1\}$ and $(\rho^*,\omega^*)\in 
	\RR\times\RR^4$.
	
	\eqref{p:0i}: Set $\widehat{\rho}=1$ and $\widehat{\omega}=(1,-1,1,-1)$. Since $b$ is continuous at $(\widehat{\rho},\widehat{\omega})$ and $F_{i,j}(\widehat{\rho})+\iota_{C}(\widehat{\omega})<\pinf$, it follows from \cite[Proposition~15.5(iv) \& Theorem~15.3]{19.Livre1}, \cite[Proposition~13.30 \& Example~13.3(i)-(ii)]{19.Livre1}, and \cite[Proposition~2.3(iv)]{Comb18} that 
	\begin{align}
		\phi_{i,j}^*(\rho^*,\omega^*)
		&=\inf_{(\rho,\omega)\in\RR\times \RR^4}b^*(\rho^*-\rho,\omega^*-\omega)+F_{i,j}^*(\rho)+\iota_{C^-}(\omega)\nonumber\\
		&=\inf_{\substack{(\rho,\omega)\in\RR\times C^{-},\\
				\rho^*+\ell^*(\|\omega^*-\omega\|)\le \rho}}F_{i,j}^*(\rho)\nonumber\\
		&=\inf\limits_{\omega\in C^{-}}F_{i,j}^*(\rho^*+\ell^*(\|\omega^*-\omega\|))\nonumber\\
		&=F_{i,j}^*(\rho^*+\ell^*(\|\omega^*-P_{C^-}\omega^*\|)),\nonumber
	\end{align}
	where, in the last equality, we have used that $\RR\ni t\mapsto F_{i,j}^*(\rho^*+\ell^*(t))\in\RR$ is increasing. The result follows from the identity $P_C=\Id-P_{C^-}$.
	
	\eqref{p:0ii}:
	Using that, for all $\xi\in\RR^{4}$, $\|P_{C}\xi\|=d_{C^-}(\xi)$, ${\ell^*}'(0)=0$, and $P_C\xi=0$ if and only if $\xi\in C^{-}$, it follows from \cite[Example 17.33]{19.Livre1} that $\ell^*\circ \|\cdot\|\circ P_C$ is differentiable and $\nabla(\ell^*\circ \|\cdot\|\circ P_C)=\zeta_{C}$. Thus, by~\ref{p:0i}, $\phi_{i,j}^*$ is differentiable in $\RR\times\RR^4$ and~\eqref{eq:nabla_phi_etoile} holds. 
\end{proof}

For every $C\in\{K,\RR^4\}$, we define
\begin{multline}
	D_C=\Big\{(m,w)\in\M_{h}\times \mathcal{W}_h\,\Big|\,-\nu\Delta_{h}m + \mbox{div}_h w=0,\\
	h^2\sum_{i,j=0}^{N-1}m_{i,j}= 1,\:\:\big(\forall i,j\in\{0,\hdots,N-1\}\big)\quad w_{i,j}\in C\Big\}.
\end{multline}
Note that Problem~\ref{prob:MFG} can be written equivalently as
\begin{equation}
	\label{e:form1}
	\min_{(m,w)\in 
		D_{C_1}}\sum_{i,j=0}^{N-1}\big(b(m_{i,j},w_{i,j})+F_{i,j}(m_{i,j})+\iota_{C_2}(w_{i,j})\big)
\end{equation}
either if $(C_1,C_2)=(K,\RR^4)$ or $(C_1,C_2)=(\RR^4,K)$.
In view of \cite[Proposition~13.30 \& 
Definition~15.19]{19.Livre1} and Proposition~\ref{p:0}, formulation
\eqref{e:form1} leads to the following 
Fenchel-Rockafellar dual problem
\begin{align}
	&\min_{(\theta,v)\in \M_{h}\times\mathcal{W}_{h}}
	\sigma_{D_{C_1}}(-\theta,-v)+\sum_{i,j=0}^{N-1}
	F_{i,j}^*\big(\theta_{i,j}+\funp^*(\|P_{C_2}v_{i,j}\|)\big).
	\label{e:form1d}
\end{align}

Note that \eqref{e:form1d} can be written as \eqref{e:mainprob}, where
$\psi=\sigma_{D_{C_1}}\circ(-\Id)$, $\varphi\colon (\theta,v)\mapsto \sum_{i,j=0}^{N-1}\phi^*_{i,j}(\theta_{i,j},v_{i,j})$, and, for every $i,j\in\{0,\ldots,N-1\}$, $\phi^*_{i,j}\colon (\theta_{i,j},v_{i,j})\mapsto F_{i,j}^*(\theta_{i,j}+\funp^*(\|P_{C_2}v_{i,j}\|))$. By Proposition~\ref{p:0}\eqref{p:0ii} we deduce that $\varphi$ is differentiable. Under additional assumptions on ${\ell^*}'$ and $({F_{i,j}^*}')_{0\le i,j\le N-1}$, one can prove that \eqref{e:form1d} is a particular instance of Problem~\ref{prob:main}. In the next section, we provide some examples and explicit computations of Lipschitz and strong convexity constants for the function $\varphi$. 

Notice that, by~\cite[Proposition~19.4]{19.Livre1} and Proposition~\ref{p:0}\eqref{p:0ii}, we recover the unique solution $(m^{h},w^{h})$ to Problem~\ref{prob:MFG} from a solution $(\theta^{h},v^{h})$ to Problem~\ref{e:form1d} through the expressions 
\begin{equation}
	\label{eq:primal_dual_sol}
	(\forall\,i,j\in\{0,\hdots,N-1\})\quad m_{i,j}^{h}={F_{i,j}^*}'\Big(\theta^{h}_{i,j}+\funp^*(\|P_{C}v^{h}_{i,j}\|)\Big)\:\:\text{and}\:\: w_{i,j}^{h}=m_{i,j}^{h}\,\zeta_{C}(v^{h}_{i,j}).
\end{equation}
Since~\eqref{eq:optimal_w_h} implies that  
\begin{equation} 
	\label{eq:non_linear_term}
	(\forall\,i,j\in\{0,\hdots,N-1\})\quad\ell^{*}\big(\|P_{K}\big(-[D_hu]_{i,j}\big)\|\big)= \ell^{*}\big(\ell'\big(\|w_{i,j}^{h}\|/m_{i,j}^{h}\big)\big),
\end{equation}
one can compute $(u^{h},\lambda^{h})$ by solving the linear system consisting in the first equation in~\eqref{eq:finite_difference_scheme} together with the condition $\sum_{i,j=0}^{N-1}u^{h}_{i,j} =0$.

Therefore, in what follows, we focus on approximating the solution $(\theta^{h},v^{h})$ to~\eqref{e:form1d} for which we will consider the following routine.
\vspace{0.2cm}
\begin{algo}
	\label{alg:FBS1d}
	Let $(\theta_0,v_0)\in \M_{h}\times\mathcal{W}_{h}$, 
	let $\gamma\in\RPP$, and consider the recurrence:
	\begin{equation}
		\label{e:FBS_form2d}
		(\forall n\in\NN)\quad 
		\begin{array}{l}
			\left\lfloor
			\begin{array}{l}
				\text{for }i,j=0,\ldots,N-1\\
				\left\lfloor
				\begin{array}{l}
					t_{i,j,n} = \theta_{i,j,n}-\gamma {F_{i,j}^*}'(\theta_{i,j,n}+\funp^*(\|P_{C_2}v_{i,j,n}\|)) \\
					\nu_{i,j,n}=v_{i,j,n}-\gamma {F_{i,j}^*}'(\theta_{i,j,n}+\funp^*(\|P_{C_2}v_{i,j,n}\|))\zeta_{C_{2}}(v_{i,j,n})\\
				\end{array}
				\right.\\
				(\theta_{n+1},v_{n+1})=\prox_{\gamma\sigma_{D_{C_1}}}((t_{i,j,n},\nu_{i,j,n})_{0\le i,j\le N-1}).
			\end{array}
			\right.
		\end{array}
	\end{equation}
\end{algo}

\section{Numerical results}
\label{sec:numerical_results}
In this section, we consider some numerical experiments in two instances. In the first experiment, we consider the first order mean field game studied in {\cite{Gomesex}, in which the exact solution is known. In this context we compare the relative exact error of Algorithm~\ref{alg:FBS1d} when $(C_1,C_2)=(K,\RR^4)$ and when $(C_1,C_2)=(\RR^4,K)$. The second experiment is devoted to a second-order mean field game in which the coupling term is the sum of a power function with an entropic penalization. We compare state-of-the art algorithms to both algorithms tested in the first experiment.
	We test a bunch of different values of time steps and record the best performance corresponding to the minimum number of iterations of each algorithm considered in each numerical comparison.  
	
	\subsection{A first order mean field game with a logarithmic coupling}
	\label{subsec:ex1}
	We consider here the MFGs system
	\begin{align}
		\dfrac{1}{2}|\nabla u|^{2}+\lambda&=\log(m)-\sin(2\pi x)-\sin(2\pi y)\quad\text{in }\TT^{2},\\
		\text{div}(m\nabla u)&=0\quad\text{in }\TT^{2},\;\;\int_{\TT^{2}}u(x){\rm d}x=0,\;\;\int_{\TT^{2}}m(x){\rm d}x=1,\;\; m>0.
	\end{align}
	Its finite difference discretization~\eqref{eq:finite_difference_scheme} is given by 
	\begin{equation}
		\label{eq:finite_difference_scheme_example_1}
		\begin{array}{l}
			\frac{1}{2}\|P_{K}\big(-[D_hu]_{i,j}\big)\|^{2}+\lambda=  
			\log(m_{i,j})-\sin(2\pi ih)-\sin(2\pi jh),\\[6pt]
			\hspace{8cm} \text{for all } i,j=0,\hdots,N-1,\\[2pt]
			\displaystyle\big(\text{div}_h\big(m\,P_{K}\big(-[D_hu]\big)\big)\big)_{i,j}=0,\quad\text{for all } i,j=0,\hdots,N-1, \\[6pt]
			\displaystyle\sum_{i,j=0}^{N-1}u_{i,j} =0,\;\; h^2 \sum_{i,j=0}^{N-1}m_{i,j} =1,\;\;m_{i,j}>0,\;\;\;\text{for all } i, j=0,\hdots,N-1.
		\end{array}
	\end{equation}
	Note that, since $\nu=0$ in \eqref{eq:finite_difference_scheme_example_1}, one cannot apply Proposition~\ref{prop:optimality_condition_discrete_problem}. However, one can easily check (see~\cite[Remark 2.1]{BAKS}) that~\eqref{prop:optimality_condition_discrete_problem} corresponds to the optimality condition of Problem~\ref{prob:MFG}, where $\nu=0$, $\funp=|\cdot|^2/2+\iota_{\RP}$, for every $i,j\in\{0,\ldots,N-1\}$, 
	$F_{i,j}\colon \rho\mapsto \rho(\ln(\rho)-1)-c_{i,j}\rho$ and 
	$c_{i,j}= \sin(2\pi ih)+\sin(2\pi jh)$. One of the main interests of system~\eqref{prop:optimality_condition_discrete_problem} is that it admits an explicit solution (see~\cite{Gomesex}) which allows us to establish the precise error for its approximation. In particular, the explicit solution to the dual problem \eqref{e:form1d} in this case 
	is 
	\begin{equation}
		\label{e:solgomes}
		\theta^*_{i,j}=t^*,\quad\text{for all }i,\,j=0,\hdots,N-1,\;\;\text{where } t^*=-\ln\bigg(h^2\sum_{i,j=0}^{N-1}e^{c_{i,j}}\bigg),\;\;\text{and }v^{*}=0.
	\end{equation}
	Moreover, since $\funp^*=\max\{\text{sgn}(\cdot)|\cdot|^2/2,0\}$ and
	$F^*_{i,j}\colon \rho^*\mapsto e^{\rho^*+c_{i,j}}$, 
	the dual formulation~\eqref{e:form1d}
	reduces to 
	\begin{equation}
		\label{e:form1dex}
		\min_{(\theta,v)\in\M_{h}\times\mathcal{W}_{h}}
		\sigma_{D_{C_1}}(-\theta,-v)+	 \sum_{i,j=0}^{N-1}
		e^{\theta_{i,j}+\frac{1}{2}\|P_{C_2}v_{i,j}\|^2+c_{i,j}}\;.
	\end{equation}
	Now, in the case when $C_1=K$ and $C_2=\RR^4$, let us prove that \eqref{e:form1dex} is a particular instance of Problem~\ref{prob:main} which is also strongly convex. Indeed, let $M_0>0$, 
	for every $i,j\in\{0,\ldots,N-1\}$, set 
	\begin{equation}
		L_{M_0,i,j}(\theta_{i,j}^*,v_{i,j}^*)=e^{\theta_{i,j}^*+c_{i,j}}g(M_0^2)\le 
		e^{\theta_{i,j}^*+c_{i,j}+M_0}(3+M_0^2)
\end{equation} 
and 
	\begin{equation}
		\mu_{M_0,i,j}(\theta_{i,j}^*,v_{i,j}^*)=e^{\theta_{i,j}^*+c_{i,j}-M_0},
	\end{equation}
	where $g\colon M\mapsto\max_{x\in[0,M]}e^{\sqrt{M-x}}(1+x/2+(2+x/2)\sqrt{x/(4+x)})$.
	Then, $\phi_{i,j}^*:(\rho^*, \omega^*)\mapsto e^{\rho^*+\|\omega^*\|^2/2+c_{i,j}}$ is 
	$\mu_{M_0,{i,j}}(\theta_{i,j}^*,v_{i,j}^*)$-strongly convex, differentiable in $B((\theta_{i,j}^*,v_{i,j}^*),M_0)$. Moreover, in view of Proposition~\ref{p:0}\eqref{p:0ii},
	${\ell^*}'=\max\{\Id,0\}$, $\zeta_{C_{2}}=\Id$, and
	$\nabla\phi^*_{i,j}\colon (\rho^*,\omega^*)\mapsto 
	e^{\rho^*+\|\omega^*\|^2/2+c_{i,j}}(1,\omega^*)^{\top}$ 
	is $L_{M_0,{i,j}}(\theta_{i,j}^*,v_{i,j}^*)$-Lipschitz 
	continuous in $B((\theta_{i,j}^*,v_{i,j}^*),M_0)$.
	Therefore, $\varphi\colon (\theta,v)\mapsto\sum_{i,j=0}^{N-1}\phi^*_{i,j}(\theta_{i,j},v_{i,j})$ is 
	$\mu_{M_0}(\theta^*,v^*)$-strongly convex and $\nabla\varphi$ is $L_{M_0}(\theta^*,v^*)$-Lipschitz 
	continuous in $B((\theta^*,v^*),M_0)$, where
	\begin{multline}
		L_{M_0}(\theta^*,v^*)=\max_{i,j=0,\ldots,N-1}L_{M_0,i,j}(\theta_{i,j}^*,v_{i,j}^*)=
		e^{t^*+\overline{c}}g(M_0^2)\le 
		e^{t^*+\overline{c}+M_0}(3+M_0^2),\\
		\mu_{M_0}(\theta^*,v^*)=
		\min_{i,j=0,\ldots,N-1}\mu_{M_0,i,j}(\theta_{i,j}^*,v_{i,j}^*)
		=e^{t^*+\underline{c}-M_0}>0,
	\end{multline}
	$\underline{c}=\min_{i,j=0,\ldots,N-1}c_{i,j}$, and $\overline{c}=\max_{i,j=0,\ldots,N-1}c_{i,j}$.
	In this case, Algorithm~\ref{alg:FBS1d} reduces to
	\begin{equation}
		\label{e:FBS_form1dex}\tag{DFB0}
		(\forall n\in\NN)\quad 
		\begin{array}{l}
			\left\lfloor
			\begin{array}{l}
				\text{for }i,j=0,\ldots,N-1\\
				\left\lfloor
				\begin{array}{l}
					t_{i,j,n} = \widetilde{\theta}_{i,j,n}-\gamma e^{\widetilde{\theta}_{i,j,n}+\|\widetilde{v}_{i,j,n}\|^2/2+c_{i,j}}\\
					\nu_{i,j,n}=(1-\gamma e^{\widetilde{\theta}_{i,j,n}+\|\widetilde{v}_{i,j,n}\|^2/2+c_{i,j}})\widetilde{v}_{i,j,n}\\
				\end{array}
				\right.\\
				(\widetilde{\theta}_{n+1},\widetilde{v}_{n+1})=\prox_{\gamma\sigma_{D_K}}((t_{i,j,n},\nu_{i,j,n})_{0\le i,j\le N-1}),
			\end{array}
			\right.
		\end{array}
	\end{equation}
	where $(\widetilde{\theta}_{0},\widetilde{v}_{0})\in\M_{h}\times\mathcal{W}_{h}$ and $\gamma >0$. In this context,
	if $\gamma\in\left]0,2/L_{M_0}(\theta^*,v^*)\right[$, with $M_{0}=\|(\widetilde{\theta}_{0},\widetilde{v}_{0})-({\theta}^{*},{v}^{*})\|$, Theorem~\ref{t:conv}\eqref{t:convii} guarantees 
	the linear convergence of the sequence $(\widetilde{\theta}_n,\widetilde{v}_n)_{n\in\NN}$ generated by \eqref{e:FBS_form1dex}
	to $(\theta^*,v^*)$. In addition, the optimal rate is achieved by choosing $\gamma^*$ from Remark~\ref{rem:optrate}\eqref{rem:optratei}. Observe that, in view of \eqref{e:moreau_decomp}, the computation of $\prox_{\gamma\sigma_{D_K}}$ is obtained from the projection onto $D_K$, which needs a subroutine.
	
	On the other hand, if $C_1=\RR^4$ and $C_2=K$,
	$\varphi\colon (\theta,v)\mapsto \sum_{i,j=0}^{N-1}e^{\theta_{i,j}+\|P_Kv_{i,j}\|^2/2+c_{i,j}}$ is convex, differentiable and 
\begin{multline}
\label{eq:nabla_phi_ex_1}
\nabla\varphi\colon (\theta,v)\mapsto (e^{\theta_{i,j}+\|P_Kv_{i,j}\|^2/2+c_{i,j}}(1,P_Kv_{i,j})^{\top})_{0\le i,j\le N-1}\\
=\nabla\varphi(\theta,(P_Kv_{i,j})_{0\le i,j\le N-1})
\end{multline}
is $L_{M_0}(\theta^*,v^*)$-Lipschitz continuous in $B((\theta^*,v^*),M_0)$, for every $M_0>0$.
	However, for every $M_0>0$, $\varphi$ is not strongly convex in $B((\theta^*,v^*),M_0)$ (i.e., $\mu_{M_0}(\theta^*,v^*)=0$). Indeed,
	let $\theta=\theta^*$ and let $v\in (\RR_-\times\RR_+\times\RR_-\times\RR_+)^N\setminus\{0\}$. Then, for every $i,j\in\{0,\ldots,N-1\}$, $P_Kv_{i,j}=0$ and \eqref{e:solgomes} yields
	\begin{align}
		&(\nabla\varphi(\theta,v)-\nabla\varphi(\theta^*,v^*))_{i,j}\nonumber\\
		&=e^{\theta_{i,j}+\|P_Kv_{i,j}\|^2/2+c_{i,j}}(1,P_Kv_{i,j})^{\top}-e^{\theta_{i,j}^*+\|P_Kv_{i,j}^*\|^2/2+c_{i,j}}(1,P_Kv_{i,j}^*)^{\top}\nonumber\\
		&=e^{t^*+c_{i,j}}(1,0)^{\top}-e^{t^*+c_{i,j}}(1,0)^{\top}=0
	\end{align}
	and hence
	\begin{equation}
		\scal{\nabla\varphi(\theta,v)-\nabla\varphi(\theta^*,v^*)}{(\theta-\theta^*,v-v^*)}=0.
	\end{equation}
	Then, \eqref{e:form1dex} reduces to Problem~\ref{prob:main} but without any strong convexity.
	In this case, Algorithm~\ref{alg:FBS1d} reduces to
	\begin{equation}
		\label{e:FBS_form2dex}\tag{DFB1}
		(\forall n\in\NN)\quad 
		\begin{array}{l}
			\left\lfloor
			\begin{array}{l}
				\text{for }i,j=0,\ldots,N-1\\
				\left\lfloor
				\begin{array}{l}
					t_{i,j,n} = \overline{\theta}_{i,j,n}-\gamma e^{\overline{\theta}_{i,j,n}+\|P_K\overline{v}_{i,j,n}\|^2/2+c_{i,j}}\\
					\nu_{i,j,n}=\overline{v}_{i,j,n}-\gamma e^{\overline{\theta}_{i,j,n}+\|P_K\overline{v}_{i,j,n}\|^2/2+c_{i,j}}P_K\overline{v}_{i,j,n}\\
				\end{array}
				\right.\\
				(\overline{\theta}_{n+1},\overline{v}_{n+1})=\prox_{\gamma\sigma_{D_{\RR^4}}}((t_{i,j,n},\nu_{i,j,n})_{0\le i,j\le N-1}),
			\end{array}
			\right.
		\end{array}
	\end{equation}
	where $(\overline{\theta}_{0},\overline{v}_{0})\in \M_{h}\times\mathcal{W}_{h}$ and $\gamma >0$. In this context,
	if $\gamma\in\,]0,2/L_{\overline{M}_0}(\theta^*,v^*)[$, with $\overline{M}_0=\|(\overline{\theta}_{0},\overline{v}_{0})-({\theta}^{*},{v}^{*})\|$}, Theorem~\ref{t:conv}\eqref{t:convi} guarantees 
the convergence of the sequence $(\overline{\theta}_n,\overline{v}_n)_{n\in\NN}$ generated by \eqref{e:FBS_form2dex}
to $(\theta^*,v^*)$. Our result does not guarantee linear convergence, but the computation of 
$\prox_{\gamma\sigma_{D_{\RR^4}}}$ is obtained from the projection onto $D_{\RR^4}$, which only needs 
a numerically efficient matrix inversion.

In Figure~\ref{fig:gomes:cr} we compare the numerical behavior of the algorithms in \eqref{e:FBS_form1dex}, \eqref{e:FBS_form2dex}, and the theoretical upper bound obtained in Theorem~\ref{t:conv}\eqref{t:convii}. We set $\widetilde{x}_{n}=(\widetilde{\theta}_{n},\widetilde{v}_{n})$ and $\overline{x}_{n}=(\overline{\theta}_{n},\overline{v}_{n})$ for the iterates computed with~\eqref{e:FBS_form1dex} and~\eqref{e:FBS_form2dex}, respectively. We consider the step-size $\gamma^*$ achieving the optimal convergence rate in Remark~\ref{rem:optrate}\eqref{rem:optratei} and, for \eqref{e:FBS_form2dex}, we set $\gamma=1.99/L_{\overline{M}_0}(\theta^*,v^*)$.
We also set the error tolerance to $7\cdot 10^{-5}$ and $\widetilde{x}_0=(\widetilde{\theta}_0,\widetilde{v}_0)=(\overline{\theta}_0,\overline{v}_0)=\overline{x}_0$. In each case, the initial point $x_0=\widetilde{x}_0,\,\overline{x}_0$ is chosen by perturbing  $x^*=(\theta^*,v^*)$ randomly and such that $\|x_0-x^*\|\in \{0.1,0.5\}$. We observe that the numerical and theoretical linear convergence rates are closer for closer starting points, while in both cases the error of 
\eqref{e:FBS_form2dex} decreases slowly with respect to the number of iterations. However, since 
\eqref{e:FBS_form1dex} involves sub-iterations in order to compute the projection onto $D_K$, it is actually much slower in terms of computational time than \eqref{e:FBS_form2dex} as it can be perceived in Table~\ref{t:1}.

\begin{figure}[ht!]
	\centering
	\begin{subfigure}[b]{0.49\textwidth}
		\includegraphics[width=\textwidth]{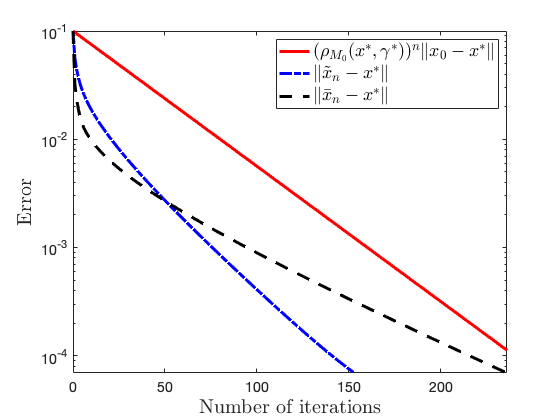}
		\caption{$M_0=\|x_0-x^*\|=0.1$.}
		\label{fig:gomes:cr:01}
	\end{subfigure}
	\begin{subfigure}[b]{0.49\textwidth}
		\includegraphics[width=\textwidth]{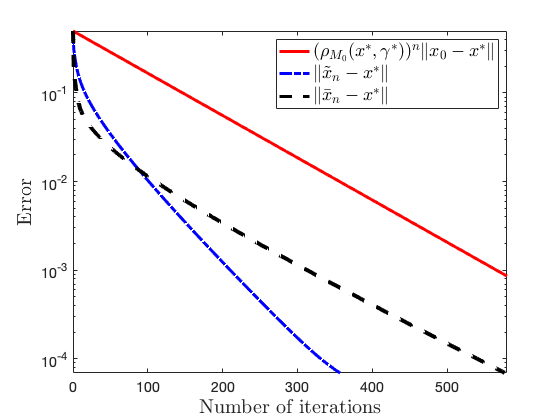}
		\caption{$M_0=\|x_0-x^*\|=0.5$.}
		\label{fig:gomes:cr:05}
	\end{subfigure}
	\caption{Numerical comparison of the theoretical upper bound and the errors of the algorithms in \eqref{e:FBS_form1dex} and \eqref{e:FBS_form2dex} for a tolerance of $7\cdot 10^{-5}$ and two initial conditions.}
	\label{fig:gomes:cr}
\end{figure}

\begin{table}[ht!]\scriptsize
	\centering
	\begin{tabular}{c c c c c c c c c}
		\hline
		& &  &  &   &  &  &  & \\
		&  &  \multicolumn{3}{c}{DFB0} & &\multicolumn{3}{c}{DFB1} \\
		\hhline{~|~|-|-|-|~|-|-|-}
		& &  &  &   &  &  & & \\
		$\|x_0-x^*\|$ & & $\gamma$ & Time (s) & Iterations &   & $\gamma$ & Time (s) &  Iterations \\
		& &  &  &   &  &  & & \\
		$0.1$ & & $0.3748$ & $170.343$ & $153$ & & $0.3748$ & $3.609$ & $236$ \\
		$0.5$ & & $0.2132$ & $566.695$ & $357$ & & $0.2132$ & $8.486$ & $577$\\
		\hline
	\end{tabular}
	\vspace{0.2cm}
	\caption{Performance of \eqref{e:FBS_form1dex} and \eqref{e:FBS_form2dex}, for different initialization when the number of grid nodes (DoF) is $60^2$ and the tolerance is $7\cdot 10^{-5}$.  }
	\label{t:1}
\end{table}

\subsection{A second order mean field game with logarithmic and power couplings}
Let $\nu\in\RPP$, $\alpha\in\left]1,2\right]$, and $\epsilon\in\RP$. We consider the MFGs system
\begin{align}
	-\nu\Delta u+\frac{1}{2}\|\nabla u\|^{2}+\lambda&=
	m^{\alpha-1}+\epsilon\log m+\widetilde{H}(x,y)\quad\text{in }\TT^{2},\nonumber\\
	-\nu\Delta m-\text{div}\big(m\nabla u\big)&=0\quad\text{in }\TT^{2},\nonumber\\
	\int_{\TT^2} u(x) {\rm d} x=0, &\:\:\int_{\TT^2}m(x){\rm d} x=1,  \: m>0,
\end{align}
where, for all $(x,y)\in\TT^{2}$,
$$
\widetilde{H}(x,y)=-\frac{\sin(2\pi x)+\sin(2\pi y)+\cos(4\pi x)}{2}.
$$
We consider the finite difference approximation~\eqref{eq:finite_difference_scheme} of the system above, which corresponds to the optimality condition of Problem~\ref{prob:MFG} with $\funp=|\cdot|^2/2+\iota_{\RP}$ and 
\begin{equation}
	F_{i,j}\colon \rho\mapsto 
	\begin{cases}
		\frac{1}{\alpha}\rho^\alpha+c_{i,j}\rho+\epsilon\, \rho(\log \rho-1),&\text{if}\:\:\rho>0;\\
		0, &\text{if}\:\:\rho=0;\\
		\pinf, &\text{if}\:\:\rho<0,
	\end{cases}    
\end{equation}
where, for every $i,j\in\{0,\ldots,N-1\}$, $c_{i,j}=\widetilde{H}(ih,jh)$.  

	\begin{table}[ht!]\Huge
		\centering
		\resizebox{\textwidth}{!}{
			\begin{tabular}{c c c c c c c c c c c c c c c c}
				\hline
				& &  &  &   &  &  &  & & & & & & & &   \\
				& \multicolumn{3}{c}{CP} & & \multicolumn{3}{c}{DR} & &  \multicolumn{3}{c}{DFB0} & &  \multicolumn{3}{c}{DFB1} \\
\hhline{~|-|-|-|~|-|-|-|~|-|-|-|~|-|-|-}
				&  &  &   &  &  &  &  & & & & & & & &  \\
				$\nu$    & Time (s) & Iterations & $\gamma$ &  & Time (s) & Iterations & $\gamma$ & & Time (s) & Iterations & $\gamma$ & & Time (s) & Iterations & $\gamma$ \\
				& &  &  &   &  &  &  & & & & & & & &    \\
				$0.1$ & $5.285$  & $26$ & $1.05$  &   & $5.2094$ & $26$ & $1.05$ & & $34.4958$ & $23$ & $0.6$  &  & $1.6456$ & $30$  & $0.65$     \\
				$0.5$ & $3.1873$ & $16$   & $1.05$  &   & $3.1924$ & $16$ & $1.05$  & & $17.4105$ & $12$  & $0.55$  & & $0.50957$ & $10$ & $0.55$  \\
				\hline
		\end{tabular}}
		\vspace{0.2cm}
		\caption{Performance comparison among CP, DR, \eqref{e:FBS_form1dex}, and \eqref{e:FBS_form2dex}, for varying values of $\nu$ when $\epsilon = 0$ and the number of grid nodes (DoF) is $60^2$.  }
		\label{tb:rel0}
	\end{table}

	\begin{table}[ht!]\Huge
		\centering
		\resizebox{\textwidth}{!}{
			\begin{tabular}{c c c c c c c c c c c c c c c c}
				\hline
				& &  &  &   &  &  &  & & & & & & & &   \\
				& \multicolumn{3}{c}{CP} & & \multicolumn{3}{c}{DR} & &  \multicolumn{3}{c}{DFB0} & &  \multicolumn{3}{c}{DFB1} \\
\hhline{~|-|-|-|~|-|-|-|~|-|-|-|~|-|-|-}
				&  &  &   &  &  &  &  & & & & & & & &  \\
				$\nu$    & Time (s) & Iterations & $\gamma$ &  & Time (s) & Iterations & $\gamma$ & & Time (s) & Iterations & $\gamma$ & & Time (s) & Iterations & $\gamma$ \\
				& &  &  &   &  &  &  & & & & & & & &    \\
				$0.1$ & $5.6857$ & $26$ & $0.95$  &    & $5.5486$ & $26$ & $0.95$ & & $32.6036$  & $22$ & $0.7$ & & $1.4765$  & $27$ & $0.75$    \\
				$0.5$ & $3.5337$ & $17$   & $0.95$  &   & $3.5753$ & $17$ & $0.95$  & & $13.831$ & $10$  & $0.65$ & & $0.48746$ & $10$ & $0.65$  \\
				\hline
		\end{tabular}}
		\vspace{0.2cm}
		\caption{Performance comparison among CP, DR, \eqref{e:FBS_form1dex}, and \eqref{e:FBS_form2dex}, for varying values of $\nu$ when $\epsilon = 0.1$ and the number of grid nodes (DoF) is $60^2$.  }
		\label{tb:rel01}
	\end{table}
	
	\begin{table}[ht!]\Huge
		\centering
		\resizebox{\textwidth}{!}{
			\begin{tabular}{c c c c c c c c c c c c c c c c}
				\hline
				& &  &  &   &  &  &  & & & & & & & &   \\
				& \multicolumn{3}{c}{CP} & & \multicolumn{3}{c}{DR} & &  \multicolumn{3}{c}{DFB0} & &  \multicolumn{3}{c}{DFB1} \\
\hhline{~|-|-|-|~|-|-|-|~|-|-|-|~|-|-|-}
				&  &  &   &  &  &  &  & & & & & & & &  \\
				$\nu$    & Time (s) & Iterations & $\gamma$ &  & Time (s) & Iterations & $\gamma$ & & Time (s) & Iterations & $\gamma$ & & Time (s) & Iterations & $\gamma$ \\
				& &  &  &   &  &  &  & & & & & & & &    \\
				$0.1$ & $7.9191$ & $25$ & $0.8$  &   & $7.561$ & $25$ & $0.8$ & & $25.3507$ & $17$ & $1$ & & $1.3145$ & $20$ & $1.1$    \\
				$0.5$ & $5.6428$ & $20$  & $0.75$  &   & $5.4536$ & $20$ & $0.75$  & & $15.2802$  & $8$ & $1.05$ & & $0.49585$ & $8$ & $1.1$   \\
				\hline
		\end{tabular}}
		\vspace{0.2cm}
		\caption{Performance comparison among CP, DR, \eqref{e:FBS_form1dex}, and \eqref{e:FBS_form2dex}, for varying values of $\nu$ when $\epsilon = 0.5$ and the number of grid nodes (DoF) is $60^2$.  }
		\label{tb:rel05}
	\end{table}

\begin{figure}[ht!]
	\centering
	\begin{subfigure}[b]{0.49\textwidth}
		\includegraphics[width=\textwidth]{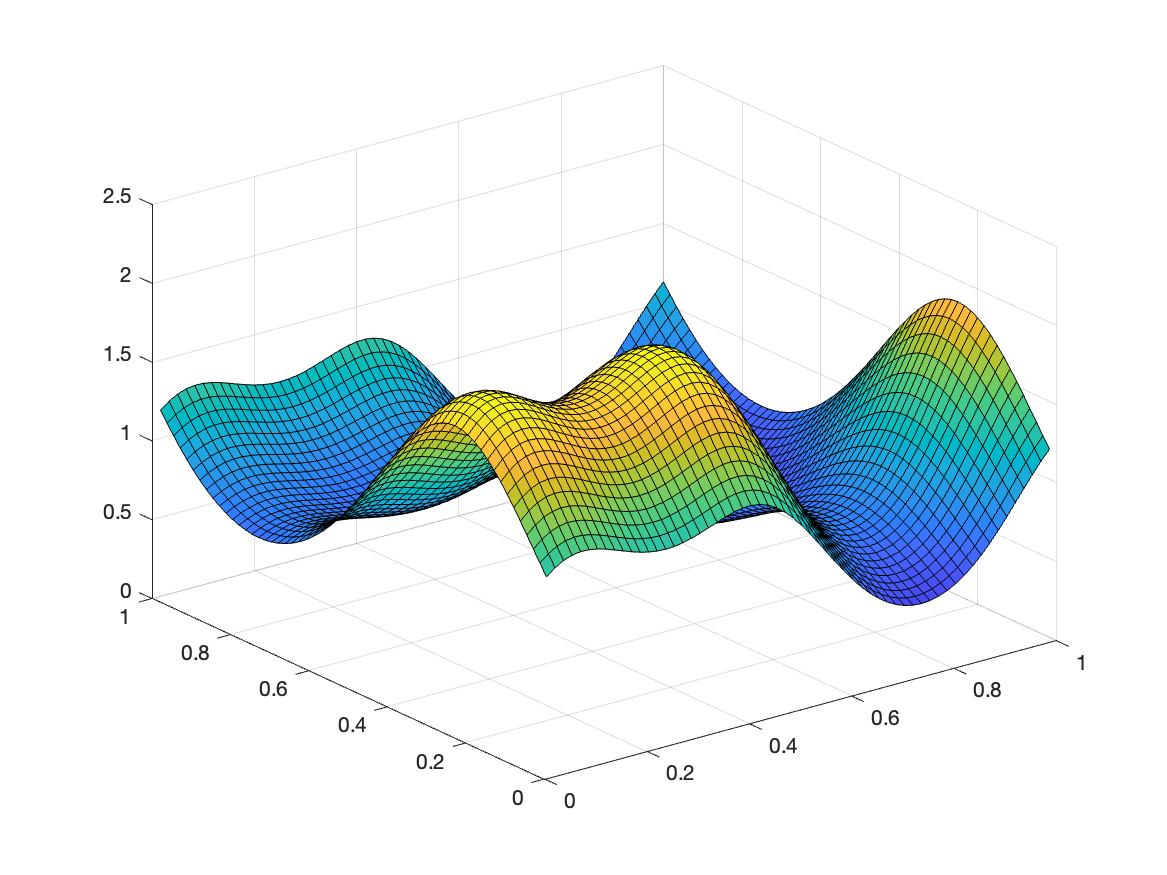}
		\caption{$\epsilon=0,\:\:  \nu=0.1$.}
		\label{fig:2nd:01}
	\end{subfigure}
	\begin{subfigure}[b]{0.49\textwidth}
		\includegraphics[width=\textwidth]{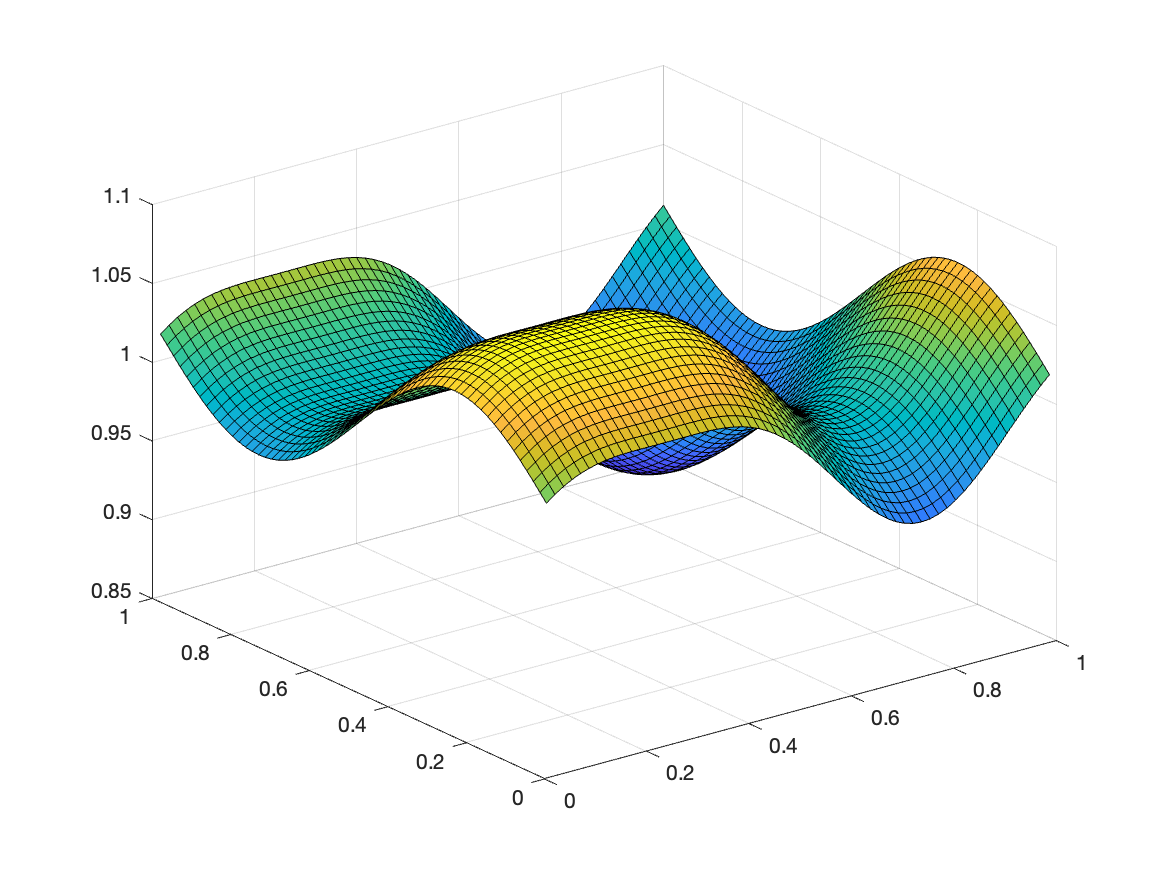}
		\caption{$\epsilon=0,\:\:   \nu=0.5$.}
		\label{fig:2nd:02}
	\end{subfigure}
	\begin{subfigure}[b]{0.49\textwidth}
		\includegraphics[width=\textwidth]{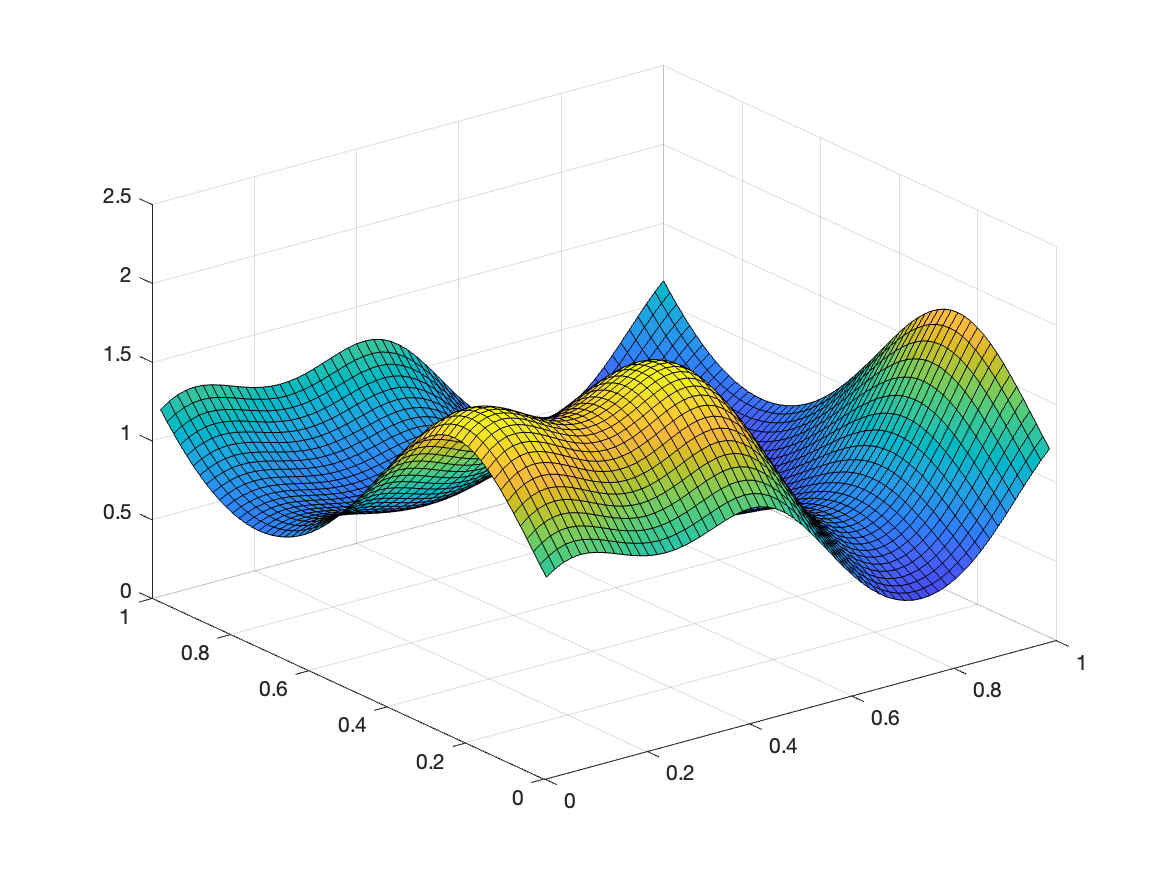}
		\caption{$\epsilon=0.1,\: \:  \nu=0.1$.}
		\label{fig:2nd:03}
	\end{subfigure}
	\begin{subfigure}[b]{0.49\textwidth}
		\includegraphics[width=\textwidth]{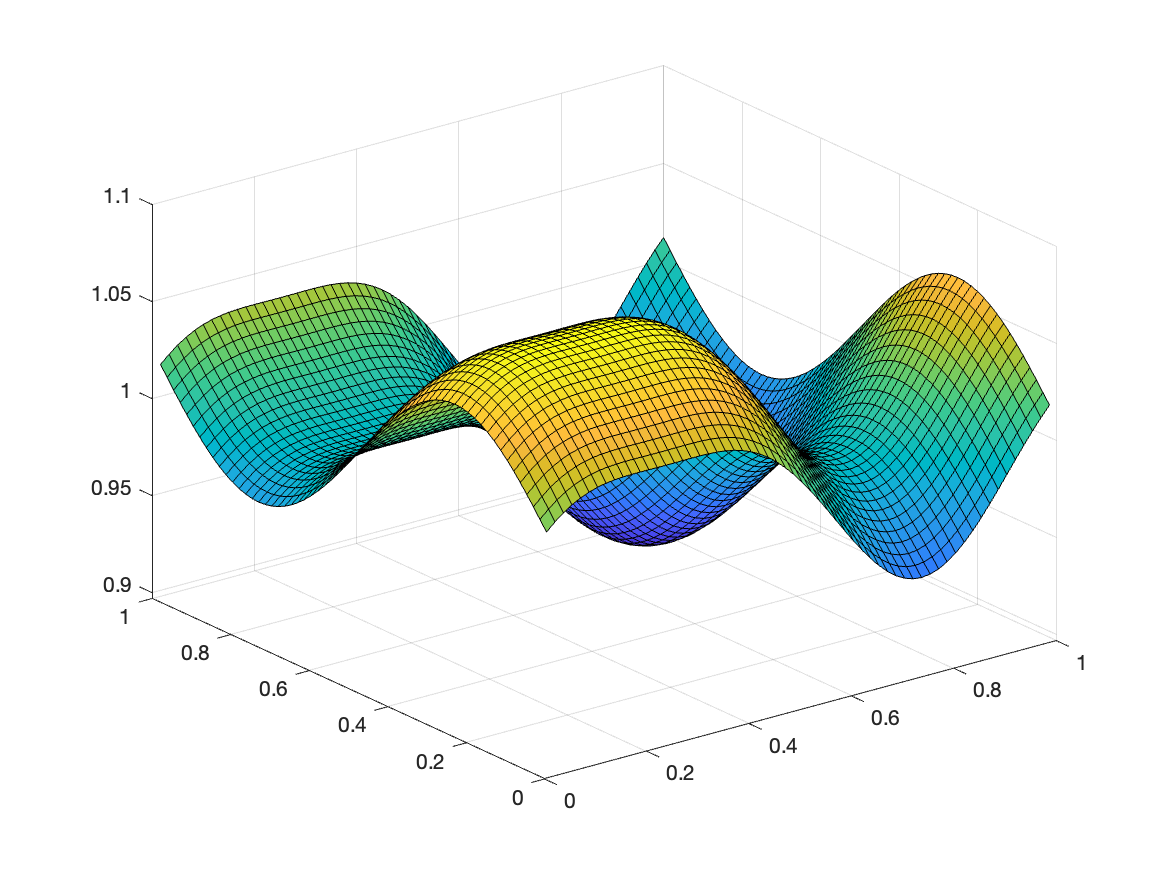}
		\caption{$\epsilon=0.1,\: \:  \nu=0.5$.}
		\label{fig:2nd:04}
	\end{subfigure}
	\begin{subfigure}[b]{0.49\textwidth}
		\includegraphics[width=\textwidth]{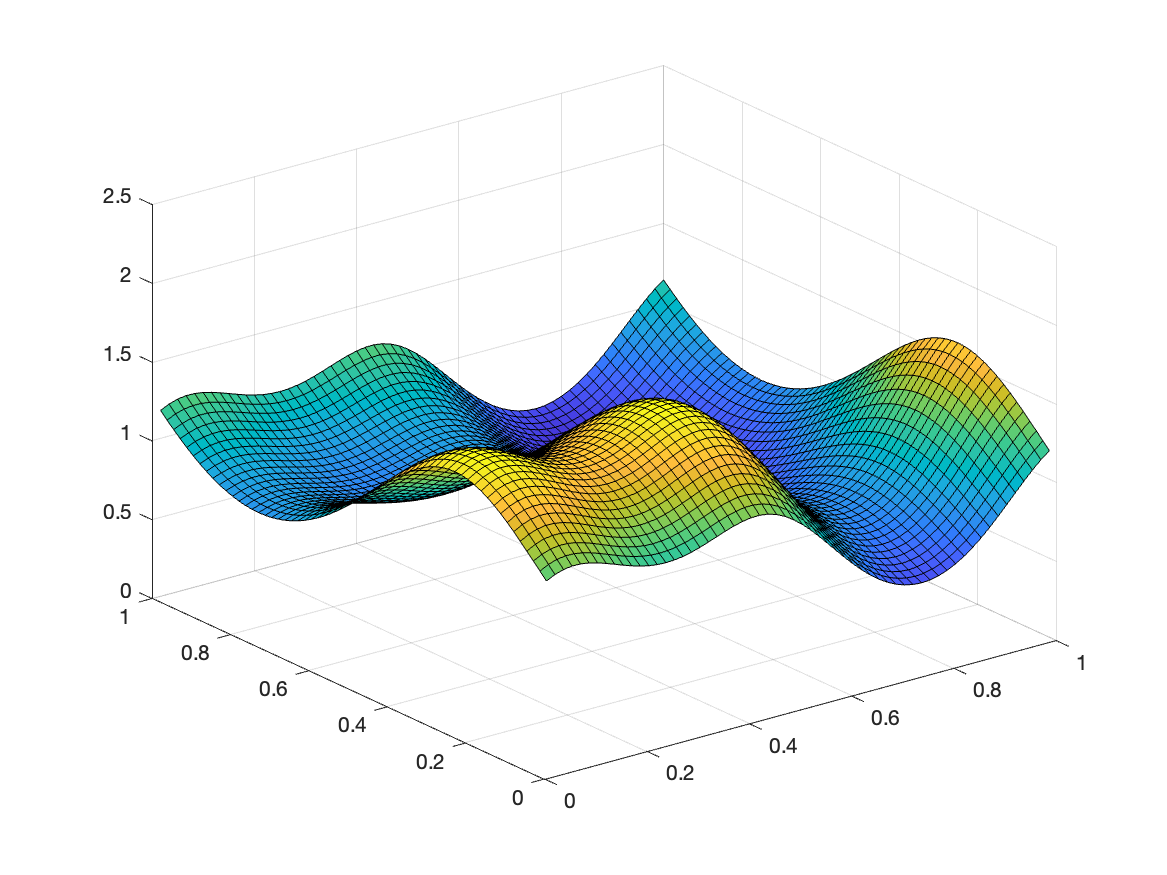}
		\caption{$\epsilon=0.5,\:\:   \nu=0.1$.}
		\label{fig:2nd:05}
	\end{subfigure}
	\begin{subfigure}[b]{0.49\textwidth}
		\includegraphics[width=\textwidth]{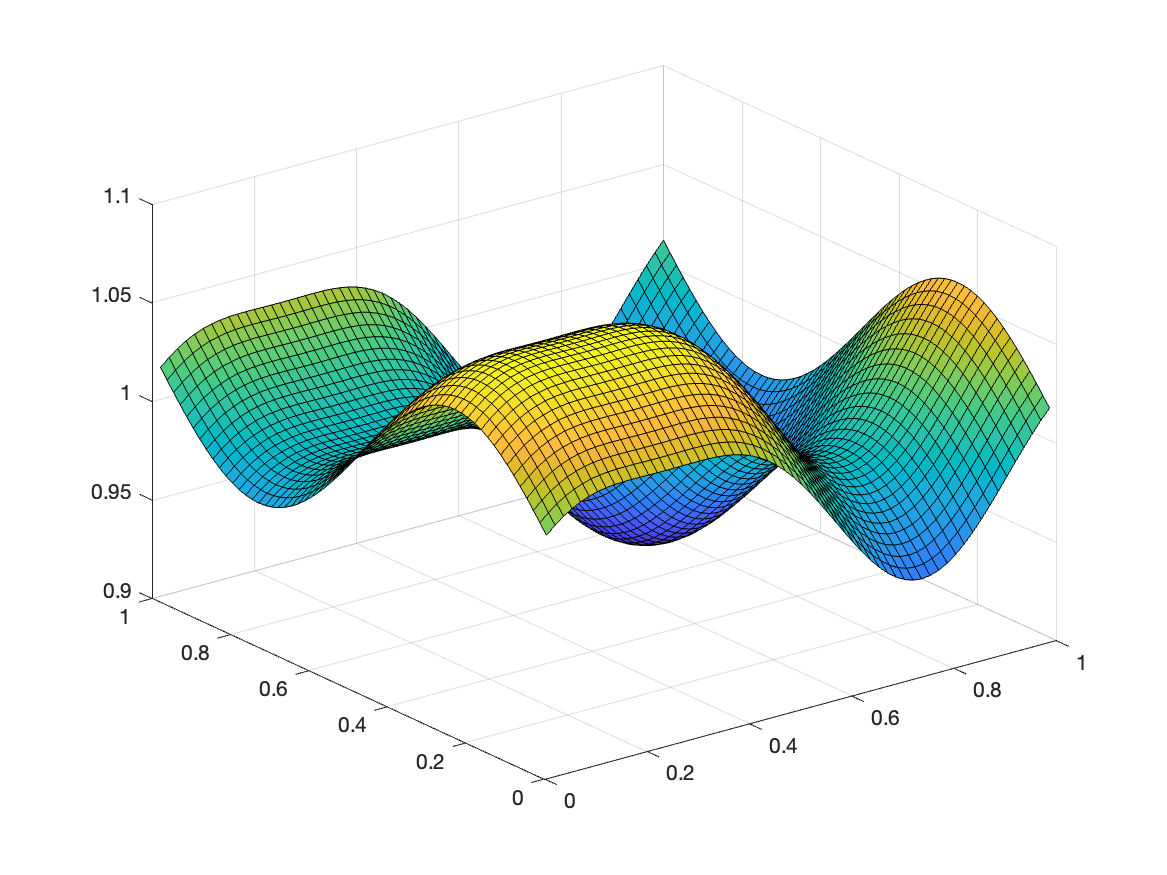}
		\caption{$\epsilon=0.5,\:\:   \nu=0.5$.}
		\label{fig:2nd:06}
	\end{subfigure}
	\vspace{0.2cm}
	\caption{Numerical results of \eqref{e:FBS_form2dex} for different values of $\epsilon$ and $\nu$.}
	\label{fig:gomes:cr_bis}
\end{figure}

Since, for every $i,j\in\{0,\ldots,N-1\}$, $F_{i,j}$ and $\ell$ are supercoercive and strictly convex on their domains, Proposition~\ref{p:0}\eqref{p:0ii} implies that $\phi^*_{i,j}\colon (\theta_{i,j},v_{i,j})\mapsto F_{i,j}^*(\theta_{i,j}+\|v_{i,j}\|^2/2)$ is everywhere defined, differentiable, and
\begin{equation}
	\label{e:gradphist}
	\nabla\phi_{i,j}^*\colon (\theta_{i,j},v_{i,j})\mapsto
	(F_{i,j}^*)'(\theta_{i,j}+\|v_{i,j}\|^2/2)
	\begin{pmatrix}1\\v_{i,j}
	\end{pmatrix}.
\end{equation}
In the case when $\epsilon >0$, we have, for every $i,j\in\{0,\ldots,N-1\}$, $\dom\partial F_{i,j}=\ran (F_{i,j}^*)' =\RPP$  and, therefore, for every $(\rho,y)\in\RR\times\RPP$,
\begin{align}
	& \quad\quad \,\,\  y=(F_{i,j}^*)'(\rho)\nonumber\\
	\quad&\Leftrightarrow\quad 
	\rho=F_{i,j}'(y)=y^{\alpha-1}+c_{i,j}+\epsilon\ln(y)\nonumber\\
	&\Leftrightarrow\quad 
	\Big(\frac{\alpha-1}{\epsilon}\Big)(\rho-c_{i,j})+\ln\Big(\frac{\alpha-1}{\epsilon}\Big)=\Big(\frac{\alpha-1}{\epsilon}\Big)y^{\alpha-1}+\ln\bigg(\Big(\frac{\alpha-1}{\epsilon}\Big)y^{\alpha-1}\bigg)\nonumber\\
	&\Leftrightarrow\quad 
	\Big(\frac{\alpha-1}{\epsilon}\Big)e^{\big(\frac{\alpha-1}{\epsilon}\big)(\rho-c_{i,j})}=\bigg(\Big(\frac{\alpha-1}{\epsilon}\Big)y^{\alpha-1}\bigg)e^{\big(\frac{\alpha-1}{\epsilon}\big)y^{\alpha-1}}\nonumber\\
	&\Leftrightarrow\quad 
	y=\Big(\frac{\epsilon}{\alpha-1}\Big)^{\frac{1}{\alpha-1}}W_0\bigg(\Big(\frac{\alpha-1}{\epsilon}\Big)e^{\big(\frac{\alpha-1}{\epsilon}\big)(\rho-c_{i,j})}\bigg)^{\frac{1}{\alpha-1}},
\end{align}
where $W_0$ stands for the principal branch of the Lambert W-function.
On the other hand, in the case $\epsilon=0$, we have $\dom\partial F_{i,j}=\ran (F_{i,j}^*)' =\RP$.  Altogether, it follows from simple computations that 
\begin{align}
	(F_{i,j}^*)'\colon\rho\mapsto 
	\begin{cases}
		0,&\text{if}\:\:\epsilon=0\:\:\text{and}\:\:\rho<c_{i,j};\\
		(\rho-c_{i,j})^{\frac{1}{\alpha-1}},&\text{if}\:\:\epsilon=0\:\:\text{and}\:\:\rho\ge c_{i,j};\\
		\Big(\frac{\epsilon}{\alpha-1}\Big)^{\frac{1}{\alpha-1}}W_0\bigg(\Big(\frac{\alpha-1}{\epsilon}\Big)e^{\big(\frac{\alpha-1}{\epsilon}\big)(\rho-c_{i,j})}\bigg)^{\frac{1}{\alpha-1}},&\text{if}\:\:\epsilon>0.
	\end{cases}
\end{align}
Since $\alpha\in\left]1,2\right]$, for every $\epsilon\geq 0$ and $i,j\in\{0,\ldots,N-1\}$, the function $(F_{i,j}^*)'$ is Lipschitz continuous on bounded sets. Thus, we derive from \eqref{e:gradphist} that Problem~\ref{prob:MFG} is a particular instance of 
Problem~\ref{prob:main}.

As in Section~\ref{subsec:ex1}, we denote by \eqref{e:FBS_form1dex} and \eqref{e:FBS_form2dex} the Algorithm~\ref{alg:FBS1d} with $(C_1,C_2)=(K,\RR^4)$ and $(C_1,C_2)=(\RR^4,K)$, respectively. In Tables~\ref{tb:rel0}-\ref{tb:rel05}, we compare the computational time and number of iterations that algorithms
\eqref{e:FBS_form1dex}, \eqref{e:FBS_form2dex}, Chambolle-Pock (CP) \cite{12.CP}, and Douglas-Rachford (DR) \cite{lions} take to approximate a solution for the case when $\alpha=1.5$ with a tolerance of $\varepsilon=h^3$ for the relative error $\|(\theta_{n+1}-\theta_n,v_{n+1}-v_n)\|/\|(\theta_n,v_n)\|$. 
For CP, we use critical step-sizes, i.e., $\sigma\gamma=1$, where $\sigma>0$ and $\gamma>0$ are the step-sizes associated with dual and primal variable updates, respectively. This choice is justified by theoretical results ensuring convergence of CP with critical step-sizes \cite{JGO1,Siopt3} and numerical results asserting that the numerical behavior of CP improves as $\sigma\gamma$ approaches $1$ \cite{Siopt3}. Moreover, CP and DR show identical 
numerical behavior, which is justified by the fact that
CP coincides with DR for critical step-sizes \cite{12.CP,Vand}.
We also observe that \eqref{e:FBS_form1dex} needs less number of iterations to achieve a relative error of $\varepsilon=h^3$, however its computational time is much larger because of the subiterations needed to compute the projection onto $D_K$. By considering computational time,~\eqref{e:FBS_form2dex} is the more efficient algorithm, obtaining a time reduction up to 90\% with respect to CP and DR. Moreover, note that the computational time of CP and DR increases as long as the parameter $\epsilon$ multiplying the entropy penalization increases. On the other hand,  \eqref{e:FBS_form2dex} has similar computational time and even a smaller number of iterations to achieve the tolerance as $\epsilon$ increases. We display in Figure~\ref{fig:gomes:cr_bis} the approximation of the equilibrium configurations $m^{h}$ that we obtain for different values of $\epsilon$ and the viscosity parameter $\nu$. Comparing the distributions on the left of Figure~\ref{fig:gomes:cr_bis} with those on the right, we observe that, as expected, for larger viscosity parameters the diffusive behavior of equilibria prevails with regard to the individual preferences of the agents.

\backmatter

\bmhead{Acknowledgments}

The work of Luis M. Brice\~{n}o-Arias was supported by 
Agencia Nacional de Investigaci\'on y Desarrollo 
(ANID-Chile) under grants
FONDECYT 1190871, FONDECYT 1230257, and by Centro de Modelamiento
Matem\'{a}tico (CMM), ACE210010 and FB210005, BASAL funds for centers
of excellence. The work of F. J. Silva was partially supported by l'Agence Nationale de la Recherche (ANR), project  ANR-22-CE40-0010, and by KAUST through the subaward agreement ORA-2021-CRG10-4674.6. X. Yang was partly funded by the Air Force Office Scientific Research under MURI award number FA9550-20-1-0358 (Machine Learning and Physics-Based Modeling and Simulation).

%


\bibliographystyle{spmpsci} 
\bibliography{sn-bibliography}

\end{document}